\title{ARTICLE - THE CRAMER CONDITION FOR THE CURIE-WEISS MODEL OF SOC}
\author{Matthias Gorny}
\date{\today}
\newcommand{\ind}[1]{\mathds{1}_{#1}}
\newcommand{\N}{\mathbb{N}}
\newcommand{\Z}{\mathbb{Z}}
\newcommand{\R}{\mathbb{R}}
\newcommand{\C}{\mathbb{C}}
\renewcommand{\P}{\mathbb{P}}
\newcommand{\E}{\mathbb{E}}
\newcommand{\Bl}{\mathrm{B}}
\newcommand{\Cc}{\mathcal{C}}
\newcommand{\Fc}{\mathcal{F}}
\newcommand{\Lc}{\mathcal{L}}
\newcommand{\Uc}{\mathcal{U}}
\newcommand{\Vc}{\mathcal{V}}
\newcommand{\Xc}{\mathcal{X}}
\newcommand{\Ck}[1]{\mathcal{C}^{#1}}
\def\Dro{\smash{{D}^{\!\!\!\!\raise4pt\hbox{$\scriptstyle o$}}}}
\def\Ccro{\smash{{\mathcal{C}}^{\!\!\!\raise4pt\hbox{$\scriptstyle o$}}}}
\def\Aro{\smash{{A}^{\!\!\!\raise5pt\hbox{$\scriptstyle o$}}}}
\def\Bro{\smash{{B}^{\!\!\!\raise5pt\hbox{$\scriptstyle o$}}}}
\newcommand{\gfrac}[2]{\genfrac{}{}{0pt}{1}{#1}{#2}}
\newcommand{\limsupn}{\underset{n \to +\infty}{\mathrm{limsup}}\,\,}
\newcommand{\liminfn}{\underset{n \to +\infty}{\mathrm{liminf}}\,\,}
\newcommand{\limsupk}{\underset{k \to +\infty}{\mathrm{limsup}}\,\,}
\renewcommand{\a}{\alpha}
\renewcommand{\b}{\beta}
\newcommand{\g}{\gamma}
\newcommand{\G}{\Gamma}
\renewcommand{\d}{\delta}
\newcommand{\D}{\Delta}
\renewcommand{\epsilon}{\varepsilon}
\newcommand{\eps}{\varepsilon}
\newcommand{\z}{\zeta}
\renewcommand{\r}{\rho}
\newcommand{\s}{\sigma}
\renewcommand{\phi}{\varphi}
\renewcommand{\l}{\lambda}
\renewcommand{\L}{\Lambda}
\newcommand{\x}{\xi}
\renewcommand{\o}{\omega}
\renewcommand{\O}{\Omega}
\newtheorem{theo}{Theorem}
\newtheorem{prop}[theo]{Proposition}
\newtheorem{lem}[theo]{Lemma}
\renewenvironment{proof}{\noindent{\bf Proof.}}{\qed}
\begin{document}

\renewcommand{\contentsname}{Contents}
\renewcommand{\refname}{\textbf{References}}
\renewcommand{\abstractname}{Abstract}

\begin{center}
\begin{Huge}
The Cram\'er Condition for\medskip

the Curie-Weiss Model of SOC
\end{Huge} \bigskip\bigskip \bigskip \bigskip

\begin{Large} Matthias Gorny \end{Large} \smallskip
 
\begin{large} {\it Universit\'e Paris Sud \emph{and} ENS Paris} \end{large} \bigskip \bigskip

\end{center}
\bigskip \bigskip \bigskip

\begin{abstract}
\noindent We pursue the study of the Curie-Weiss model of self-organized criticality we designed in~\cite{CerfGorny}. We extend our results to more general interaction functions and we prove that, for a class of symmetric distributions satisfying a Cram\'er condition $(C)$ and some integrability hypothesis, the sum $S_{n}$ of the random variables behaves as in the typical critical generalized Ising Curie-Weiss model. The fluctuations are of order $n^{3/4}$ and the limiting law is $k \exp(-\lambda x^{4})\,dx$ where $k$ and $\lambda$ are suitable positive constants. In~\cite{CerfGorny} we obtained these results only for distributions having an even density.
\end{abstract}
\bigskip \bigskip \bigskip \bigskip \bigskip

\noindent {\it AMS 2010 subject classifications:} 60F05 60K35.

\noindent {\it Keywords:} Ising Curie-Weiss, self-organized criticality, Laplace's method.

\newpage

\section{Introduction}
\label{Intro}

\noindent In~\cite{CerfGorny}, we introduced a Curie-Weiss model of self-organized criticality (SOC): we transformed the distribution associated to the generalized Ising Curie-Weiss model by implementing an automatic control of the inverse temperature which forces the model to evolve towards a critical state.\medskip

\noindent We proved rigorously that this model exhibits a phenomenon of self-organized criticality: if we build the model with a probability $\r$ having an even density which satisfies some integrability conditions, then, asymptotically, the sum $S_n$ of the random variables behaves as in the typical critical generalized Ising Curie-Weiss model. The fluctuations of $S_{n}$ are of order $n^{3/4}$ and the limiting law~is
\[\left(\frac{4}{3}\right)^{1/4}\G\left(\frac{1}{4}\right)^{-1} \exp\left(-\frac{s^{4}}{12}\right)\,ds\,.\]
Our result presents an unexpected universal feature. Indeed, this is in contrast to the situation in the critical generalized Ising Curie-Weiss model: at the critical point, the fluctuations are of order $n^{1-1/2k}$, where $k$ depends on the distribution~$\r$. Moreover our integrability conditions on $\r$ are weaker than those required to define the generalized Ising Curie-Weiss model, studied by Richard S. Ellis and Charles M. Newman in~\cite{EN}. For instance, our result holds for any centered Gaussian measure on $\R$.\medskip

\noindent The hypothesis that the law $\r$ has a density is essential in the proof of the fluctuations result in~\cite{CerfGorny}. Here we use arguments coming from the work of Anders Martin-L\"of~\cite{MartinLof} to extend this result to any symmetric probability measure which satisfies some integrability hypothesis and a Cram\'er condition:
\begin{equation}
\forall \a>0 \qquad \sup_{\|(s,t)\|\geq \a}\left|\int_{\R}e^{isz+itz^2}\,d\r(z)\right|<1.\tag{$C$}
\end{equation}
This includes a much larger class of probability measures. However the proof is much more technical. We also solve the problem of the mass at $0$ of $\r$ that we met in~\cite{CerfGorny} and we extend the law of large numbers associated to our model.\medskip

\noindent In this paper, we also extend our results to more general interaction functions. This extension is similar in spirit to the work of Richard S. Ellis and Theodor Eisele~\cite{EE} in the context of the generalized Ising Curie-Weiss model.\medskip

\noindent \textbf{The model.} Let $g$ be a measurable real-valued function defined on $\R$ such that $g(u)\sim u^2/2$ in the neighbourhood of~$0$ and
\[\forall u\in \R\qquad g(u)\leq \frac{u^2}{2}\,.\]
Let $\r$ be a probability measure on $\R$, which is not the Dirac mass at 0. We consider an infinite triangular array of real-valued random variables $(X_{n}^{k})_{1\leq k \leq n}$ such that, for all $n \geq 1$, $(X^{1}_{n},\dots,X^{n}_{n})$ has the distribution $\widetilde{\mu}_{n,\r,g}$, whose density with respect to $\r^{\otimes n}$ is
\[(x_{1},\dots,x_{n})\longmapsto\frac{1}{Z_{n,g}}\exp\left(
ng\left(\frac{x_1+\dots+x_n}{\sqrt{n(x_1^2+\dots+x_n^2)}}\right)
\right)\ind{\{x_{1}^{2}+\dots+x_{n}^{2}>0\}}\,,\]
where
\[Z_{n,g}=\int_{\R^{n}}\exp\left(
ng\left(\frac{x_1+\dots+x_n}{\sqrt{n(x_1^2+\dots+x_n^2)}}\right)
\right)\ind{\{x_{1}^{2}+\dots+x_{n}^{2}>0\}}\,\prod_{i=1}^{n}d\r(x_{i})\,.\]
We define $S_{n}=X^{1}_{n}+\dots+X^{n}_{n}$ and $T_{n}=(X^{1}_{n})^{2}+\dots+(X^{n}_{n})^{2}$.\medskip

\noindent We state next our main result, which is a strengthening of theorems $1$ and $2$ of~\cite{CerfGorny}:

\begin{theo} Let $\r$ be a symmetric probability measure on $\R$ with positive variance $\s^{2}$ and such that
\[\exists v_{0}>0 \qquad \int_{\R}e^{v_{0}z^{2}}\,d\r(z)<+\infty\,.\]

\noindent \textup{Law of large numbers:} Under $\widetilde{\mu}_{n,\r,g}$, $(S_{n}/n,T_{n}/n)$ converges in probability towards $(0,\s^{2})$.\medskip

\noindent We suppose in addition that $g$ has a fourth derivative at~$0$ and that the following Cram\'er condition holds: 
\begin{equation}
\forall \a>0 \qquad \sup_{\|(s,t)\|\geq \a}\left|\int_{\R}e^{isz+itz^2}\,d\r(z)\right|<1.
\label{(C)} \tag{$C$}
\end{equation}
Let $\mu_{4}$ be the fourth moment of $\r$. We denote $m_4=-g^{(4)}(0)/2\geq 0$.\medskip

\noindent \textup{Fluctuations result:} Under $\widetilde{\mu}_{n,\r,g}$,
\[\left(\mu_{4}+m_4\s^4\right)^{1/4}\,\frac{S_{n}}{\s^{2}n^{3/4}} \overset{\Lc}{\underset{n \to \infty}{\longrightarrow}} \left(\frac{4}{3}\right)^{1/4}\G\left(\frac{1}{4}\right)^{-1} \exp\left(-\frac{s^{4}}{12}\right)\,ds\,.\]
\label{MainTheorem}
\end{theo}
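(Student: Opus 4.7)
The argument splits according to the two conclusions.

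\noindent\textbf{Law of large numbers.} The integrability hypothesis ensures that the joint cumulant generating function
\[ \Lambda(u,v) \,=\, \log \int_\R e^{uz+vz^2}\,d\rho(z) \]
is finite in a neighbourhood of the origin, so Cram\'er's theorem in $\R^2$ provides, under $\rho^{\otimes n}$, a large deviation principle for $(S_n/n, T_n/n)$ with good rate function $I = \Lambda^*$. The weight $\exp(n g(s/\sqrt{nt}))$ turns the effective rate on $\{b>0\}$ into $J(a,b) = I(a,b) - g(a/\sqrt{b})$. The inequality $I(a,b) \ge a^2/(2b)$ established in~\cite{CerfGorny} by Gaussian domination, combined with the pointwise bound $g(u) \le u^2/2$, forces $J \ge 0$; equality forces $a = 0$ (using $g(u) \sim u^2/2$ near $0$) and then $I(0,b)=0$ forces $b = \sigma^2$. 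Exponential tightness coming from the exponential moment on $Z^2$ then upgrades this to the stated convergence in probability.

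\noindent\textbf{Fluctuations.} For the second statement I would follow the Fourier strategy of Martin-L\"of~\cite{MartinLof}. Setting $\phi(u,v) = \int_\R e^{iuz+ivz^2}\,d\rho(z)$, the joint law $\nu_n$ of $(S_n,T_n)$ under $\rho^{\otimes n}$ is represented via $\phi^n$ by Fourier inversion. Starting from
\[ \widetilde\mu_{n,\rho,g}\bigl[F(S_n/n^{3/4})\bigr] \,=\, \frac{1}{Z_{n,g}}\int_{t>0} F(s/n^{3/4})\,e^{n g(s/\sqrt{nt})}\,d\nu_n(s,t), \]
I substitute this Fourier representation and rescale $s = n^{3/4}x$, $t = n\sigma^2 + n^{1/2}y$ in physical space, and $u = n^{-3/4}\xi$, $v = n^{-1/2}\eta$ in Fourier space. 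Two cancellations drive the asymptotics at $(0,\sigma^2)$. First, the $O(\sqrt{n})$ leading part of $n g(s/\sqrt{nt})$, equal to $s^2/(2t)$ up to lower order, annihilates the quadratic $\xi^2$-part of $n\log\phi(\xi n^{-3/4},\eta n^{-1/2})$ since $g''(0)=1$ and $\mathrm{Var}(Z)=\sigma^2$. Second, the surviving leading exponent is a non-degenerate Gaussian form in $(y,\xi,\eta)$ plus an explicit quartic $-(\mu_4+m_4\sigma^4)\,x^4/(12\sigma^8)$, whose coefficient picks up contributions from the Taylor expansion $g(u) = u^2/2 - m_4 u^4/12 + o(u^4)$ (using $g'(0)=g'''(0)=0$ and $g^{(4)}(0) = -2m_4$, all forced by $g(u) \le u^2/2$ and $g(u)\sim u^2/2$) and from the even fourth-order term of $\log\phi$, purely in $\xi$ thanks to the symmetry of $\rho$. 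Gaussian integration over $(y,\xi,\eta)$ then produces a normalising constant and the density $\propto \exp(-(\mu_4+m_4\sigma^4)\,x^4/(12\sigma^8))\,dx$, which is the stated limit after the affine rescaling in the theorem.

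\noindent\textbf{Role of $(C)$ and main obstacle.} The Fourier localisation to $\{\|(u,v)\|<\delta\}$ is justified precisely by $(C)$: on the complement, $|\phi(u,v)| \le 1-\kappa$, so this piece contributes at most $O((1-\kappa)^n)$. This replaces the density-plus-Riemann-Lebesgue argument of~\cite{CerfGorny}; in addition, since $(C)$ prevents $\rho$ from being concentrated at a single point, the ``mass at $0$'' pathology of the earlier paper disappears automatically. The main technical obstacle will be making the Laplace analysis of the rescaled four-fold integral genuinely uniform. Because criticality kills the second-order $\xi^2$-term, the naive saddle point is degenerate and one has to push the Taylor expansions of both $\log\phi$ and $g$ to fourth order, control remainders uniformly on the rescaled Fourier domain (which blows up as $n\to\infty$), and track the cross-terms coupling $y$ to $x^2$ and to $x^2\xi$: it is precisely when one completes the square in the three Gaussian variables $(y,\xi,\eta)$ that the coefficient $\mu_4+m_4\sigma^4$ emerges, and a miscounted cross-term would destroy integrability of the limit.
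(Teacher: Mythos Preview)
Your plan has the right overall shape but contains two genuine gaps, one in each half.

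\medskip
\noindent\textbf{Law of large numbers.} You write that condition~$(C)$ makes ``the `mass at $0$' pathology of the earlier paper disappear automatically.'' This is incorrect on two counts. First, $(C)$ is not assumed for the law of large numbers, so you cannot invoke it there. Second, the difficulty is not that $\rho$ might be a point mass; it is that $F(x,y)=x^{2}/(2y)$ is singular at $(0,0)$, so a naive Varadhan argument on $\Delta^{*}$ breaks down near the origin whenever $\rho(\{0\})>0$. The paper handles this by a separate estimate (its Proposition~4): it conditions on the number $N_{n}$ of indices with $X_{i}=0$, writes the contribution of $\{0<T_{n}/n\le\delta\}$ as a binomial mixture of integrals against the conditioned law $\overline{\rho}=\rho(\cdot\mid\R\setminus\{0\})$, and then applies the earlier Varadhan lemma to $\overline{\rho}$, which has no mass at $0$. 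Without such an argument your inequality $I(a,b)\ge a^{2}/(2b)$, though true on $\Delta^{*}$, does not by itself control the integral of $e^{nF}$ near $(0,0)$.

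\medskip
\noindent\textbf{Fluctuations.} Your Fourier scheme assumes that $\nu_{n}=\nu_{\rho}^{*n}$ can be ``represented via $\phi^{n}$ by Fourier inversion.'' But $(C)$ only gives $|\phi(u,v)|\le 1-\kappa$ outside a neighbourhood of the origin; it does not make $\phi^{n}$ integrable on $\R^{2}$, and in general $\nu_{\rho}^{*n}$ has no density, so the four-fold integral you want to analyse does not exist as written. This is exactly the obstacle the paper is built to overcome: it convolves $\nu_{\rho}^{*n}$ with a compactly supported mollifier $k_{c}$, so that the relevant Fourier integrand acquires the extra factor $\widehat{k}_{c}(s-i\lambda(x))$ with $O((1+s^{2})^{-1})$ decay. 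This yields a uniform local limit theorem (the paper's Theorem~8) of the form $\phi_{n,c}(x,y)\sim(2\pi n)^{-1}(\det\mathrm{D}^{2}_{(x,y)}I)^{1/2}e^{-nI(x,y)}$, and the Laplace analysis is then carried out entirely in the two physical variables $(x,y)$ against $e^{-n(I-F_{g})}$, using the expansion $I-F_{g}\sim(\mu_{4}+m_{4}\sigma^{4})x^{4}/(12\sigma^{8})+(y-\sigma^{2})^{2}/(2(\mu_{4}-\sigma^{4}))$. The price is a careful accounting (the paper's Lemma~13) showing that the mollification error and the boundary terms are $o(E_{n}(0))$ provided $c\to 0$ with $cn^{1/4}\to 0$ and $ne^{-\gamma n}c^{-2}\to 0$. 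Your ``Gaussian integration over $(y,\xi,\eta)$'' could in principle be made to work, but only after inserting such a mollifier and justifying the resulting inversion; as stated, the representation step fails.
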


\noindent The condition $(C)$ is called the Cram\'er condition for the law of $(Z,Z^2)$, where~$Z$ is a random variable with distribution $\r$. The class of probability measures satisfying $(C)$ is much larger than the class of probability measures having a density. Indeed, by the Lebesgue decomposition theorem (see~\cite{Rudin}), there exist three non-negative real numbers $a,b,c$ such that $a+b+c=1$ and
\[\r=a\,\r_{ac}+b\,\r_{d}+c\,\r_{s}\,,\]
where $\r_{ac}$ is a probability measure with density $f$, $\r_{d}$ is a discrete probability measure and $\r_{s}$ is a singular probability measure having no atoms. If $a>0$, we say that $\r$ has an absolutely continuous component.

\begin{prop} If $\r$ has an absolutely continuous component, then
\[\forall \a>0 \qquad \sup_{\|(s,t)\|\geq \a}\left|\int_{\R}e^{isz+itz^2}\,d\r(z)\right|<1\,.\]
\label{a>0Cramer}
\end{prop}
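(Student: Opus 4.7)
The plan is to split into two regimes: large $\|(s,t)\|$, where the integral decays, and a compact annulus, where continuity suffices. Decompose $\r = a\,\r_{ac} + (1-a)\,\r_*$ with $\r_* = (b\r_d + c\r_s)/(1-a)$ (the case $a=1$ is immediate). Writing $\phi_\nu(s,t) = \int_\R e^{isz+itz^2}\,d\nu(z)$, linearity and the trivial bound $|\phi_{\r_*}|\leq 1$ yield
\[|\phi_\r(s,t)| \,\leq\, a\,|\phi_{\r_{ac}}(s,t)| + (1-a).\]
It is thus enough to establish (A) $\phi_{\r_{ac}}(s,t) \to 0$ as $\|(s,t)\| \to +\infty$, and (B) $|\phi_\r(s,t)| < 1$ for every $(s,t) \neq (0,0)$; continuity of $\phi_\r$ and compactness of $\{\a \leq \|(s,t)\| \leq R\}$ then assemble these into the desired strict inequality.

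For (A), I would approximate the density $f$ of $\r_{ac}$ in $L^1(\R)$ by $g \in \Cc_c^\infty(\R)$ supported in $[-N,N]$ with $\|f-g\|_1 < \eps$, which reduces matters to showing $\int g(z)\,e^{i(sz+tz^2)}\,dz \to 0$. The phase $\psi(z) = sz+tz^2$ has $\psi'(z) = s+2tz$ and $\psi''(z) = 2t$. Along any sequence with $\|(s,t)\|\to +\infty$, I extract a subsequence and split two cases. If $|t| \to +\infty$, van der Corput's lemma on $[-N,N]$ provides a bound of order $|t|^{-1/2}$. Otherwise $|t|$ stays bounded and $|s|\to +\infty$, so eventually $|s| \geq 4N|t|$, whence $|\psi'| \geq |s|/2$ on the support of $g$ and a single integration by parts yields a bound of order $|s|^{-1}$. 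Letting $\eps \downarrow 0$ closes (A).

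For (B), if $|\phi_\r(s,t)|=1$ at some $(s,t) \neq 0$, writing $\phi_\r(s,t)=e^{i\theta}$ and taking real parts gives $\int (1-\cos(sz+tz^2-\theta))\,d\r(z)=0$. Non-negativity of the integrand forces $sz+tz^2-\theta \in 2\pi\Z$ for $\r$-a.e.\ $z$; but since $(s,t)\neq 0$, the polynomial $z \mapsto tz^2 + sz - \theta$ is nonconstant and hence takes any given value at most twice, so its preimage of the countable set $2\pi\Z$ is countable. Then $\r$ would sit on this countable set, and the inequality $\r \geq a\,\r_{ac}$ would force $\r_{ac}$ to sit on it as well, contradicting the absolute continuity of $\r_{ac}$.

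The main technical obstacle is (A). The pushforward of $\r_{ac}$ under $z \mapsto (z,z^2)$ is supported on a parabola and hence singular with respect to Lebesgue measure on $\R^2$, so one cannot invoke two-dimensional Riemann-Lebesgue directly; the oscillatory integral must instead be treated by hand, switching between van der Corput (effective when the quadratic term dominates) and nonstationary integration by parts (effective when the linear term dominates). Everything else -- the density decomposition, step (B), and the concluding compactness argument -- is routine once (A) is in place.
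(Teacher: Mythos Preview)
Your proof is correct and takes a genuinely different route from the paper's. The paper sidesteps your oscillatory-integral step~(A) entirely: it squares the characteristic function to obtain
\[|\phi_\r(s,t)|^2 \leq a^2\,\bigl|\widehat{\nu_{\r_{ac}}^{*2}}(s,t)\bigr| + (1-a^2),\]
and then invokes a lemma from~\cite{CerfGorny} giving an explicit two-dimensional density for the self-convolution $\nu_{\r_{ac}}^{*2}$. Once that density is in hand, Proposition~\ref{(C)AbsComponent} (ordinary Riemann--Lebesgue on $\R^2$ combined with the same arithmeticity argument as your step~(B)) finishes the job. In other words, the paper converts the singular parabola-supported measure into an honestly absolutely continuous one by convolving it with itself, whereas you attack the singular measure head-on via van der Corput and non-stationary phase. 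The convolution trick is shorter once the density formula for $\nu_{\r_{ac}}^{*2}$ is available, but that formula is external input; your argument is self-contained and makes transparent why the quadratic phase $tz^2$ is what drives the decay.
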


\noindent For example, the law
\[\r_0=\frac{1}{16}\d_{-1}+\frac{3}{4}\d_{0}+\frac{1}{16}\d_{1}+\exp\left(-\frac{x^2}{2}\right)\,\frac{dx}{8\sqrt{2\pi}}\]
satisfies the hypothesis of theorem~\ref{MainTheorem}.\medskip

\noindent In~\cite{CerfGorny}, we treated the case where $g(u)=u^2/2$ for any $u\in \R$. We obtained a law of large numbers under $\widetilde{\mu}_{n,\r,g}$, for symmetric probability measures $\r$ such that $\r(\{0\})<e^{-1/2}$ or such that $\r(]0,c[)=0$ for some $c>0$. The above distribution $\r_0$ does not satisfy this hypothesis. Moreover, in the fluctuations theorem of~\cite{CerfGorny}, we only deal with a distribution $\r$ having an even density $f$ which satisfies
\[\int_{\R^{2}}f^{p}(x+y)f^{p}(y)|x|^{1-p}\,dx\,dy<+\infty\,,\]
for some $p \in \, ]1,2]$: once again this is not the case for $\r_0$. Hence theorem~\ref{MainTheorem} improves the main results of~\cite{CerfGorny}. Yet its proof is much more complicated: we have to use an approximation of the identity to obtain an asymptotic relation between $\nu_{\r}^{*n}$ and its Cram\'er transform. The final Laplace's method is also much more technical than in~\cite{CerfGorny}. \medskip

\noindent Remark: If we start with the model studied in~\cite{EE} and we follow the same road as in~\cite{CerfGorny}, then we end up with the distribution $\widetilde{\mu}^{\star}_{n,\r,g}$ whose density with respect to $\r^{\otimes n}$ is
\[(x_{1},\dots,x_{n})\longmapsto\frac{1}{Z^{\star}_{n,g}}\exp\left(
n^2\,\frac{g\big((x_1+\dots+x_n)/n\big)}{x_{1}^{2}+\dots+x_{n}^{2}}\right)\ind{\{x_{1}^{2}+\dots+x_{n}^{2}>0\}}\,,\]
where $Z^{\star}_{n,g}$ is the renormalization constant. In this case, the result stated in theorem~\ref{MainTheorem} holds as well, but with $\smash{(\mu_4+m_4\s^6)^{1/4}}$ instead of $\smash{(\mu_4+m_4\s^4)^{1/4}}$.\medskip

\noindent Before we do the proof of theorem~\ref{MainTheorem} in section~\ref{ProofMainTheo}, we give some preliminaries in section~\ref{Preliminary} and we extend the results of~\cite{CerfGorny} around Varadhan's lemma in section~\ref{AroudVaradhan}. Next, in section~\ref{sectionCramer}, we give some generalities on the Cram\'er condition, we prove proposition~\ref{a>0Cramer} and we show an asymptotic relation with the Cram\'er transform.

\section{Preliminaries}
\label{Preliminary}

\noindent Here we give some notations and results derived from the sections $3$ and $5$ of~\cite{CerfGorny} and which are essential for the proof of theorem~\ref{MainTheorem}.\medskip

\noindent Let $F$ and $F_{\!g}$ be the functions defined on $\R\times\,]0,+\infty[$ by
\[\forall (x,y) \in \R\times\,]0,+\infty[ \qquad F(x,y)=\frac{x^2}{2y}\qquad\mbox{and}\qquad F_{\!g}(x,y)= g\left(\frac{x}{\sqrt{y}}\right)\,.\]
We define the sets 
\[\D=\{\,(x,y) \in \R^2 : x^2\leq y\,\} \qquad \mbox{and} \qquad \D^{\!*}=\D\backslash\{(0,0)\}\,.\]
We denote by $\nu_{\r}$ the law of $(Z,Z^2)$, where $Z$ is a random variable with distribution $\r$, and by $\widetilde{\nu}_{n,\r}$ the law of $(S_n/n,T_n/n)$ under $\r^{*n}$. Under $\widetilde{\mu}_{n,\r,g}$, the law of $(S_n/n,T_n/n)$ is
\[\frac{\displaystyle{\exp\left(nF_{\!g}(x,y)\right)\ind{\D^{\!*}}(x,y)\,d\widetilde{\nu}_{n,\r}(x,y)}}{\displaystyle{\int_{\D^{\!*}}\exp\left(nF_{\!g}(s,t)\right)\,d\widetilde{\nu}_{n,\r}(s,t)}}\,.\]

\noindent Let $\r$ be a symmetric probability measure on $\R$ with variance $\s^2$. We define the Laplace transform $\L$ of $\nu_{\r}$ by
\[\forall(u,v) \in \R^2 \qquad \L(u,v)=\ln \int_{\R}e^{uz+vz^2}\,d\r(z)\]
and by $D_{\L}$ the set of the points $(u,v) \in \R^2$ such that $\L(u,v)<+\infty$. We define next the Cram\'er transform $I$ of $\nu_{\r}$ by
\[\forall (x,y) \in \R^2 \qquad I(x,y)=\sup_{(u,v) \in \R^2} \,(\,ux+vy-\L(u,v)\,)\]
and by $D_{I}$ the set of the points $(x,y) \in \R^2$ such that $I(x,y)<+\infty$.\medskip

\noindent We suppose that $(0,0) \in \Dro_{\L}$. Then $I$ is a good rate function, i.e., it is non-negative and for any $\a>0$, the set $\{\,(x,y) \in \R^2 : I(x,y) \leq \a\,\}$ is compact. Moreover Cram\'er's theorem states that $(\widetilde{\nu}_{n,\r})_{n \geq 1}$ satisfies a large deviations principle, with speed $n$, governed by $I$.\medskip

\noindent Next $I(0,0)=-\ln \r(\{0\})$ and the function $I-F$ has a unique minimum on~$\D^{\!*}$ at $(0,\s^2)$, with $(I-F)(0,\s^2)=0$. Moreover, if the support of $\r$ contains at least three points and if $\mu_4$ denotes the fourth moment of $\r$, then, when $(x,y)$ goes to $(0,\s^2)$,
\[I(x,y)-F(x,y) \sim \frac{\mu_4 x^4}{12\s^8}+\frac{(y-\s^2)^2}{2(\mu_4-\s^4)}\,.\]
Finally, since $g$ has a fourth derivative at~$0$, the Taylor-Young formula implies that
\[g(u)=\frac{u^2}{2}+g^{(3)}(0)\frac{u^3}{6}-m_4\frac{u^4}{12}+o(u^4)\,.\]
We have $g(u)\leq u^2/2$ for any $u\in \R$. Therefore $g^{(3)}(0)=0$, $m_4\geq 0$ and thus, when $(x,y)$ goes to $(0,\s^2)$,
\[F(x,y)-F_{\!g}(x,y) =\frac{m_4 x^4}{12y^2}(1+o(1))=\frac{m_4 x^4}{12\s^4}+o(\|(x,y)\|^4)\,.\]
As a consequence
\[I(x,y)-F_{g}(x,y)\sim \frac{(\mu_4+m_4\s^4) x^4}{12\s^8}+\frac{(y-\s^2)^2}{2(\mu_4-\s^4)}\,.\]\medskip

\noindent Remark: In the case of the model given by the distribution $\widetilde{\mu}^{\star}_{n,\r,g}$, defined in the remark at the end of the introduction, we replace $F_{\!g}$ by the function $(x,y)\in\R\times\,]0,+\infty[\,\longmapsto g(x)/y$ in the sections 2-5. The only difference is that, when $(x,y)$ goes to $(0,\s^2)$,
\[I(x,y)-F_{g}(x,y) \sim\frac{(\mu_4+m_4\s^6) x^4}{12\s^8}+\frac{(y-\s^2)^2}{2(\mu_4-\s^4)}\,.\]

\newpage

\section{Around Varadhan's lemma}
\label{AroudVaradhan}

\noindent In section $6$ of~\cite{CerfGorny}, we proved the following result:

\begin{lem} Let $\r$ be a symmetric probability measure on $\R$ such that $(0,0) \in \Dro_{\L}$ and $\r(\{0\})=0$. Let $\s^2$ denote the variance of $\r$. If $A$ is a closed subset of $\R^{2}$ which does not contain $(0,\s^{2})$ then  
\[\limsupn \frac{1}{n}\ln \int_{\D^{\!*}\cap A}\exp\left(\frac{nx^{2}}{2y}\right)\,d\widetilde{\nu}_{n,\r}(x,y)<0\,.\]
\label{TypeVaradhan}
\end{lem}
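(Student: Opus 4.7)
The plan is to exploit the two facts emphasized in Section~\ref{Preliminary}: on $\D^{\!*}$ the function $F(x,y)=x^2/(2y)$ is bounded above by $1/2$, and $I-F$ is non-negative with a unique zero on $\D^{\!*}$ at $(0,\s^2)$. Since $(\widetilde{\nu}_{n,\r})_{n\geq 1}$ satisfies a large deviations principle with good rate function $I$ by Cram\'er's theorem (thanks to the hypothesis $(0,0)\in \Dro_\L$), we split $\D^{\!*}\cap A$ into three closed pieces and apply the LDP upper bound on each.

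First, because $I$ is a good rate function, its level sets are compact; choosing $M>0$ large enough that $\{I\leq 2\}\subset B((0,0),M)$, the LDP upper bound applied to the closed set $\{\|(x,y)\|\geq M\}$ gives $\limsupn n^{-1}\ln \widetilde{\nu}_{n,\r}(\{\|(x,y)\|\geq M\})\leq -2$. Combined with $F\leq 1/2$, this controls the tail integral by $e^{-3n/2}$ for $n$ large. Second, we treat a small closed ball around the origin. The hypothesis $\r(\{0\})=0$ is decisive here: it forces $I(0,0)=-\ln\r(\{0\})=+\infty$, and lower semi-continuity of the good rate function $I$ provides $\d>0$ such that $I\geq 2$ on $\overline{B((0,0),\d)}$. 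The LDP upper bound and $F\leq 1/2$ then control this contribution by $e^{-3n/2}$ as well.

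It remains to treat the middle piece $K=A\cap \D\cap \{\d\leq \|(x,y)\|\leq M\}$, which is closed and bounded, hence compact. On $K$ the inequality $y\geq x^2$ together with $\|(x,y)\|\geq \d$ force $y$ to stay bounded below by a positive constant, so $F$ is continuous on $K$. The standard Varadhan-type LDP upper bound on a compact set with continuous bounded integrand yields
\[\limsupn \frac{1}{n}\ln \int_{K}\exp\left(\frac{nx^2}{2y}\right)\,d\widetilde{\nu}_{n,\r}(x,y)\;\leq\; -\inf_K (I-F)\,.\]
Since $I-F$ is lower semi-continuous on the compact set $K$, its infimum is attained; and because the unique zero of $I-F$ on $\D^{\!*}$ is $(0,\s^2)$, which does not belong to $A\supseteq K$, this infimum is strictly positive. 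Summing the three estimates gives the claimed strictly negative limsup.

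The main obstacle is the behavior of $F$ near $(0,0)$. Along the parabola $x^2=y$, $F$ tends to its supremum $1/2$ as $(x,y)\to(0,0)$, so the integrand is not small in a neighborhood of the origin, while $\D^{\!*}$ is not closed at that point. It is precisely the hypothesis $\r(\{0\})=0$ that rescues us: without it, $I(0,0)$ would be finite, the factor $e^{nF}$ near the origin could not be dominated, and the statement itself would fail. A minor technical point is to justify Varadhan's upper bound in the exact form used above, which follows from a standard covering argument once $F$ is known to be continuous and bounded on $K$.
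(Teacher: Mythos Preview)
Your argument is correct. Note that the paper does not actually prove this lemma here: it is quoted from section~6 of~\cite{CerfGorny}, so there is no in-paper proof to compare against directly. That said, your decomposition is exactly the natural one, and it mirrors the structure of the paper's proof of the generalization, Proposition~\ref{TypeVaradhanGen}: isolate a neighbourhood of the singular point $(0,0)$, and on the remaining closed set (where $y$ is bounded away from $0$ and $F$ is continuous) invoke the standard Varadhan upper bound together with the fact that $I-F$ has its unique zero on~$\D^{\!*}$ at $(0,\s^2)\notin A$. The only difference is cosmetic: you cut according to $\|(x,y)\|$ while the paper cuts according to the $y$-coordinate, and you also peel off a far region $\{\|(x,y)\|\geq M\}$, which is harmless but not strictly needed since the Varadhan upper bound already handles unbounded closed sets when $F$ is bounded and $I$ is a good rate function.

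Your key observation --- that $\r(\{0\})=0$ forces $I(0,0)=+\infty$, so lower semi-continuity of $I$ alone makes $I\geq 2$ on a small closed ball around the origin, and the crude bound $F\leq 1/2$ on $\D^{\!*}$ then suffices --- is precisely the simplification available in this special case. It is what lets you bypass the conditioning argument of Proposition~\ref{TypeVaradhan(0,0)}, which the paper needs only when $\r(\{0\})>0$.
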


\noindent Actually we obtained in~\cite{CerfGorny} this same conclusion for symmetric measures $\r$ such that $\r(\{0\})<e^{-1/2}$ or such that $\r(]0,c[)=0$ for some $c>0$. This restriction is due to the behaviour of $I-F$ near the point $(0,0)$, which is a singularity of $F$.\medskip

\noindent In this section, we will extend this result to any non-degenerate symmetric probability measure on $\R$ such that $(0,0) \in \Dro_{\L}$. To this end, we will rely on a conditioning argument in order to reduce the problem to the case of measures which have no point mass at $0$, and to apply lemma~\ref{TypeVaradhan}. We focus first on what happens in the neighbourhood of $(0,0)$.

\begin{prop} Suppose that $\r$ is a symmetric probability measure on $\R$ with positive variance $\s^{2}$ and such that $(0,0) \in \Dro_{\L}$. There exists $\g>0$ such that, for $\d \in \,]0,\s^2[$ small enough and for $n$ large enough,
\[\int_{\D^{\!*}}e^{nx^2/(2y)}\ind{0<y\leq \d}\,d\widetilde{\nu}_{n,\r}(x,y) \leq e^{-n\g}\,.\]
\label{TypeVaradhan(0,0)}
\end{prop}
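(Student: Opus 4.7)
The difficulty is that the integrand $e^{nx^2/(2y)}$ is bounded on $\D^{\!*}$ only by the prohibitive $e^{n/2}$ (since $x^2\leq y$ there), and the crude estimate $e^{n/2}\,\widetilde{\nu}_{n,\r}(\{0<y\leq\d\})$ decays exponentially only when $\r(\{0\})<e^{-1/2}$ --- precisely the restriction we wish to remove. The plan is to condition on $N$, the number of non-vanishing entries $X_i$ in an i.i.d.\ $\r$-sample $(X_1,\dots,X_n)$, so that the Cauchy--Schwarz bound on the exponent involves only $N$, not the full $n$.

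\noindent Write $p=\r(\{0\})<1$ and let $\r_0$ denote the law of $Z$ conditioned on $\{Z\neq 0\}$. Given $\{N=k\}$, the non-vanishing variables are i.i.d.\ of law $\r_0$; writing $S_k^{(0)}$ and $T_k^{(0)}$ for their sum and sum of squares, I would rewrite the integral of interest as
\[\sum_{k=1}^{n}\binom{n}{k}(1-p)^{k}p^{n-k}\,\E\!\left[\exp\!\left(\frac{(S_k^{(0)})^2}{2T_k^{(0)}}\right)\ind{0<T_k^{(0)}\leq n\d}\right].\]
Cauchy--Schwarz then bounds each exponential by $e^{k/2}$, and for every $\l>0$ a Chernoff inequality gives
\[\P\bigl(T_k^{(0)}\leq n\d\bigr)\leq e^{\l n\d}\,q(\l)^{k}\,,\qquad q(\l)=\int_{\R}e^{-\l z^2}\,d\r_0(z)\,.\]
Since $\r_0$ has no mass at $0$, monotone convergence yields $q(\l)\to 0$ as $\l\to+\infty$, so I can fix $\l$ so large that $r:=e^{1/2}q(\l)<1$. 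Combining both bounds in each summand and using the binomial theorem gives
\[\int_{\D^{\!*}}e^{nx^2/(2y)}\ind{0<y\leq \d}\,d\widetilde{\nu}_{n,\r}(x,y)\leq e^{\l n\d}\bigl(p+(1-p)r\bigr)^{n}.\]

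\noindent Since $r<1$ and $1-p>0$, the base $s:=p+(1-p)r$ is strictly less than $1$. Choosing $\d\in\,]0,\s^2[$ small enough so that $\l\d<-\ln s$ then concludes the proof with $\g=-\ln s-\l\d>0$. The main obstacle that this scheme overcomes is the atom of $\r$ at $0$: it is what defeats the direct use of lemma~\ref{TypeVaradhan}, because the singularity of $F(x,y)=x^2/(2y)$ at $(0,0)$ blows up the crude bound $e^{n/2}$. Conditioning on $N$ factors out this atom and, at the cost of the tunable penalty $e^{\l n\d}$, converts the exponential moment $q(\l)$ of $\r_0$ at~$0$ --- which can be made arbitrarily small --- into the decay rate. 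Note that the hypothesis $(0,0)\in\Dro_{\L}$ plays no explicit role in this bound, since $z\mapsto e^{-\l z^2}$ is bounded for every $\l>0$.
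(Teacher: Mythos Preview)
Your argument is correct and takes a genuinely different, more elementary route than the paper's. The paper also conditions on the number of vanishing coordinates, but only after a geometric split of the domain $\{0<y\leq\d\}$ into a ``cone'' part $\{y\leq c|x|\}$ and a ``cap'' part $\{c|x|<y\leq\d\}$: on the cap, the exponent $x^2/(2y)$ is bounded by $\d/(2c^2)$ and Cram\'er's large deviation lower bound on $I$ near $(0,0)$ (where $I(0,0)=-\ln\r(\{0\})$) yields the decay; on the cone, after conditioning, the paper invokes lemma~\ref{TypeVaradhan} applied to the conditioned law $\overline{\r}$ and the closed set $\{y\leq c|x|\}$, which misses $(0,\overline{\s}^2)$. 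Your approach bypasses both the geometric split and lemma~\ref{TypeVaradhan} entirely: the Cauchy--Schwarz bound $(S_k^{(0)})^2\leq kT_k^{(0)}$ together with the Chernoff bound on $\{T_k^{(0)}\leq n\d\}$ and the key observation $q(\l)\to 0$ (which uses only $\r_0(\{0\})=0$) already give exponential decay after summing the binomial weights. This is shorter and, as you note, does not use the hypothesis $(0,0)\in\Dro_{\L}$; it also gives the bound for every $n\geq 1$, not just $n$ large.

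One small point: the paper stresses that $\g$ depends only on $\r$, not on $\d$. As you wrote it, $\g=-\ln s-\l\d$ depends on $\d$; to match the statement exactly, fix $\g:=-\tfrac12\ln s$ (depending only on $\r$ through $\l$ and $s$) and then require $\d<(-\ln s)/(2\l)$.
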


\noindent We notice that the constant $\g$ only depends on $\r$ (and not $\d$).\medskip

\begin{proof} If $\r(\{0\})=0$ then lemma~\ref{TypeVaradhan} implies that the constant
\[\g=-\frac{1}{2}\,\limsupn \frac{1}{n}\ln \int_{\D^{\!*}} e^{nx^2/(2y)}\ind{0<y\leq \s^2/2}\,d\widetilde{\nu}_{n,\r}(x,y)\]
is positive since $\{\,(x,y)\in \R^2:0\leq y \leq \s^2/2\,\}$ is a closed set which does not contain $(0,\s^{2})$. For $\d \in \,]0,\s^2/2[$, we have then 
\[ \limsupn \frac{1}{n}\ln \int_{\D^{\!*}} e^{nx^2/(2y)}\ind{0<y\leq \d}\,d\widetilde{\nu}_{n,\r}(x,y) \leq-2\g <-\g\,.\]
Hence the result holds for probability measures which have no point mass at~$0$.\medskip

\noindent We suppose now that $\r(\{0\})>0$. Let $n\geq 1$ and $X_1,\dots,X_n$ be independent random variables with common distribution $\r$. We put
\[ S_n=\sum_{i=1}^{n}X_i \qquad \mbox{and} \qquad T_n=\sum_{i=1}^{n}X_i^2\,.\]
For $\d>0$ small enough, we denote
\[ E_{n,\d}=\int_{\D^{\!*}}e^{nx^2/(2y)}\ind{0<y\leq \d}\,d\widetilde{\nu}_{n,\r}(x,y)\,.\]
Since $\widetilde{\nu}_{n,\r}(\D)=1$, we have
\[E_{n,\d}=\E\left(e^{S_n^2/(2T_n)}\ind{0<T_n\leq n\d}\right)\,.\]
For any $c>0$, we have
\[ E_{n,\d}\leq\E\left(e^{S_n^2/(2T_n)}\ind{T_n>0}\ind{T_n/n\leq c|S_n/n|}\right) +\E\left(e^{S_n^2/(2T_n)}\ind{c|S_n/n|<T_n/n\leq \d}\right)\]
and we write this sum $I_{n,1}+I_{n,2}$.\medskip

\begin{center}
\begin{tikzpicture}
\filldraw [color=gray!20] (2.1,4.41) -- (-2.1,4.41) -- plot [domain=-2.1:2.1] (\x,{(\x)*(\x)}) -- cycle ;
\draw [>=stealth,->] [color=gray!100] (-4,0) -- (4,0) ;
\draw [>=stealth,->] [color=gray!100] (0,-0.5) -- (0,5) ;
\draw [domain=-2.1:2.1] plot (\x,{(\x)*(\x)}) ;
\draw [thick] [domain=0:2.2] plot (\x,2*\x) ;
\draw [thick] [domain=-2.2:0] plot (\x,-2*\x) ;
\draw [color=gray!100](0,0) node[below right] {$0$} ;
\draw [color=gray!100](0,0) node {$\times$} ;
\draw (2,4) node[below right] {$y=x^{2}$} ;
\draw (2.2,4.5) node[below right] {$\textbf{y=c|x|}$} ;
\draw [color=gray!175] (1,3.5) node {$\D$} ;
\draw (0.15,1.15) node {$\d$} ;
\draw (0,2.5) node {$\times$} ;
\draw (0,2.5) node[right] {$(0,\s^2)$} ;
\fill [pattern=vertical lines] (0,0) -- plot [domain=-2:0](\x,{(\x)*(\x)}) --(-2,4) -- cycle;
\fill [pattern=vertical lines] (0,0) -- plot [domain=0:2](\x,{(\x)*(\x)}) --(2,4) -- cycle;
\fill [pattern=horizontal lines] (0,0) --(1/2,1) -- (-1/2,1) -- cycle;
\draw [dashed] (-1,0.95) -- (-1/2,0.95) ;
\draw [dashed] (1,0.95) -- (1/2,0.95) ;
\end{tikzpicture}
\end{center}

\noindent In the figure, $I_{n,1}$ is an integral on the vertically hatched area and $I_{n,2}$ is an integral on the horizontally hatched area.
\medskip

\noindent We notice that, if $c|S_n/n|<T_n/n\leq \d$, then
\[ \frac{S_n^2}{2T_n}\leq \frac{T_n^2}{2c^2T_n}\leq \frac{T_n}{2c^2}\leq \frac{n\d}{2c^2}\,.\]
We have thus
\[ I_{n,2}\leq \exp\left(\frac{n\d}{2c^2}\right)\,\P\left(c\left|\frac{S_n}{n}\right|<\frac{T_n}{n}\leq \d\right)\,.\]
We denote $\a=-\ln\r(\{0\})/2>0$. The function $I$ is lower semi-continuous, thus there exists a neighbourhood $\Uc$ of $(0,0)$ such that
\[ \forall (x,y) \in \overline{\Uc} \qquad I(x,y)\geq I(0,0)-\frac{\a}{2}=-\left(\ln \r(\{0\})+\frac{\a}{2}\right)\,.\]
We can take $\d$ small enough so that $\{\,(x,y)\in \R^2 : c|x|<y\leq \d\,\} \subset \overline{\Uc}$. We choose $c=\s/\sqrt{\a}$ (which only depends on $\r$). Cram\'er's theorem (see~\cite{DZ}) implies that
\[ \limsupn \frac{1}{n}\ln I_{n,2}\leq \frac{\d}{2c^2}-\inf_{\overline{\Uc}}\,I \leq \frac{\d}{2c^2}+\ln \r(\{0\})+\frac{\a}{2}=\ln \r(\{0\})+\frac{\a}{2}\left(1+\frac{\d}{\s^2}\right)\,.\]
If $\d<\s^2$ then this last expression is smaller than
\[\ln \r(\{0\})+\a=-2\a+\a=-\a\,.\]
Hence, for $n$ large enough,
\[ I_{n,2}\leq \exp\left(-\frac{n\a}{2}\right)\,.\]

\noindent Let us focus now on $I_{n,1}$. We define the random variable $N_n$ by 
\[ N_n=\{\,k\in \{0,\dots,n\}:X_k=0\,\}. \]
We have
\begin{align*}
I_{n,1}=\E\left(e^{S_n^2/(2T_n)}\ind{T_n>0}\ind{T_n/n\leq c|S_n/n|}\right)&=\E\left(e^{S_n^2/(2T_n)}\ind{T_n>0}\ind{T_n\leq c|S_n|}\right)\\
&=\sum_{k=0}^{n-1}\E\left(e^{S_n^2/(2T_n)}\ind{T_n\leq c|S_n|}\ind{N_n=k}\right)
\end{align*}
and, for any $k\in \{0,\dots,n-1\}$, 
\begin{align*}
&\E\left(e^{S_n^2/(2T_n)}\ind{T_n\leq c|S_n|}\ind{N_n=k}\right)\\&=\E\left(\!\!e^{S_n^2/(2T_n)}\ind{T_n\leq c|S_n|}\!\!\!\sum_{1\leq i_1<i_2<\dots<i_k\leq n}\!\!\!\ind{X_{i_1}=0}\,\dots\,\ind{X_{i_k}=0}\,\ind{\forall j\notin \{i_1,\dots,i_k\} \, X_j\neq 0}\!\!\right)\\
&=\sum_{1\leq i_1<i_2<\dots<i_k\leq n}\E\left(e^{S_n^2/(2T_n)}\ind{T_n\leq c|S_n|}\ind{X_{i_1}=0}\,\dots\,\ind{X_{i_k}=0}\,\ind{\forall j\notin \{i_1,\dots,i_k\} \, X_j\neq 0}\right)\!.
\end{align*}
The random variables $X_1,\dots,X_n$ are exchangeable, hence the expectations in the above sum are equal:
\begin{align*}
&\E\left(e^{S_n^2/(2T_n)}\ind{T_n\leq c|S_n|}\ind{N_n=k}\right)\\
&=\!\binom{n}{k}\E\left(e^{S_n^2/(2T_n)}\ind{T_n\leq c|S_n|}\ind{X_{1}\neq 0}\,\dots\,\ind{X_{n-k}\neq 0}\,\ind{X_{n-k+1}= 0}\dots\ind{X_{n}= 0}\!\right)\\
&=\!\binom{n}{k}\E\left(e^{S_{n-k}^2/(2T_{n-k})}\ind{T_{n-k}\leq c|S_{n-k}|}\ind{X_{1}\neq 0}\,\dots\,\ind{X_{n-k}\neq 0}\,\ind{X_{n-k+1}= 0}\dots\ind{X_{n}= 0}\!\right)\!.
\end{align*}
By the independence of $X_1,\dots,X_n$, we have
\begin{align*}
&\E\left(e^{S_n^2/(2T_n)}\ind{T_n\leq c|S_n|}\ind{N_n=k}\right)\\
&=\!\binom{n}{k}\,\prod_{j=n-k+1}^{n}\!\P(X_{j}= 0)\, \E\left(e^{S_{n-k}^2/(2T_{n-k})}\ind{T_{n-k}\leq c|S_{n-k}|}\ind{X_{1}\neq 0}\,\dots\,\ind{X_{n-k}\neq 0}\right)\\
&=\!\binom{n}{k}\r(\{0\})^k (1-\r(\{0\}))^{n-k}\,\E\left(\!e^{S_{n-k}^2/(2T_{n-k})}\ind{T_{n-k}\leq c|S_{n-k}|}\prod_{j=1}^{n-k}\frac{\ind{X_{j}\neq 0}}{\P(X_{j}\neq 0)}\!\right)\!.
\end{align*}
For any $k\in \{1,\dots,n\}$, we set
\[ \overline{u}_k= \E\left(e^{S_{k}^2/(2T_{k})}\ind{T_{k}\leq c|S_{k}|}\prod_{j=1}^{k}\frac{\ind{X_{j}\neq 0}}{\P(X_{j}\neq 0)}\right)\]
so that we have
\[I_{n,1}\!=\!\sum_{k=0}^{n-1}\overline{u}_{n-k} \binom{n}{k}\r(\{0\})^k (1-\r(\{0\}))^{n-k}\!=\!\sum_{k=1}^{n}\overline{u}_{k} \binom{n}{k}\r(\{0\})^{n-k} (1-\r(\{0\}))^{k}.\]
We denote by $\overline{\r}$ the probability measure $\r$ conditioned to $\R\backslash\{0\}$, i.e.,
\[ \overline{\r}=\r(\cdot|\R\backslash\{0\})= \frac{\r(\cdot\cap\R\backslash\{0\})}{1-\r(\{0\})}, \]
so that
\[\forall k\in \{1,\dots,n\} \qquad\overline{u}_k=\int_{\D^{\!*}} e^{kx^2/(2y)}\ind{y\leq c|x|}\,d\widetilde{\nu}_{k,\overline{\r}}(x,y)\,.\]
The measure $\overline{\r}$ is symmetric, $\overline{\r}(\{0\})=0$ and
\[\forall (u,v) \in \R^2 \qquad \overline{\L}(u,v)= \ln \int_{\R}e^{uz+vz^2}\,d\overline{\r}(z)\leq \L(u,v)-\ln(1-\r(\{0\}))\,,\]
thus $(0,0)\in \Dro_{\bar{\L}}$. Moreover the variance of $\overline{\r}$ is $\overline{\s}^2=\s^2(1-\r(\{0\}))^{-1}$ and the closed set $\{\,(x,y)\in \R^2 : y\leq c|x|\,\}$ does not contain $(0,\overline{\s}^2)$. Applying lemma~\ref{TypeVaradhan}, we get
\[ \limsupk \frac{1}{k}\ln \int_{\D^{\!*}} e^{kx^2/(2y)}\ind{y\leq c|x|}\,d\widetilde{\nu}_{k,\overline{\r}}(x,y)<0\,.\]
Thus there exist $\eps_0>0$ and $n_0 \geq 1$ such that
\[ \forall k\geq n_0 \qquad \overline{u}_k \leq \exp(-k\eps_0)\,.\]
For $n>n_0$, we write $I_{n,1}=A_n+B_n$ with
\[A_n=\sum_{k=1}^{n_0}\overline{u}_{k} \binom{n}{k}\r(\{0\})^{n-k} (1-\r(\{0\}))^{k}\]
and
\[B_n=\sum_{k=n_0+1}^{n}\overline{u}_{k} \binom{n}{k}\r(\{0\})^{n-k} (1-\r(\{0\}))^{k}\,.\]
For all $k \geq 1$, we have $\widetilde{\nu}_{k,\overline{\r}}(\D)=1$ thus $\overline{u}_k\leq \exp(k/2)$ and then
\begin{align*}
A_n&\leq \r(\{0\})^n \sum_{k=1}^{n_0}e^{k/2} n^k \left(\r(\{0\})^{-1}-1\right)^k\\
&\leq\r(\{0\})^n n_0 e^{n_0/2} n^{n_0} \max\left(1,\left(\r(\{0\})^{-1}-1\right)^{n_0}\right)\!.
\end{align*}
Moreover
\begin{align*}
B_n&\leq \sum_{k=n_0+1}^{n}e^{-k\eps_0} \binom{n}{k}\r(\{0\})^{n-k} (1-\r(\{0\}))^{k}\\
&\leq \left(\r(\{0\})+e^{-\eps_0}(1-\r(\{0\}))\right)^n.
\end{align*}
Therefore, setting
\[\b=-\ln\left[\r(\{0\})+e^{-\eps_0}(1-\r(\{0\}))\right]>0\,,\]
we have that, for $n$ large enough,
\[ I_{n,1}=A_n+B_n \leq \exp(-n\a)+ \exp(-n\b)\,.\]
We notice that $\eps_0$, $\a$ and $\b$ only depend on $\r$.\medskip

\noindent Finally we set $\g=\min(\a/4,\b/2)$ (which only depends on $\r$). For $\d \in \,]0,\s^2[$ small enough and $n$ large enough, we have
\[ E_{n,\d}\leq I_{n,1}+I_{n,2} \leq \exp(-n\g)\,.\]
This proves the proposition.
\end{proof}
\medskip

\noindent Now we can state the main result of this section, which is the announced refinement of lemma~\ref{TypeVaradhan} and which is essential to the proof of theorem~\ref{MainTheorem}.

\begin{prop} Let $\r$ be a symmetric probability measure on $\R$ with a positive variance $\s^{2}$ and such that $(0,0) \in \Dro_{\L}$. If $A$ is a closed subset of $\R^{2}$ which does not contain $(0,\s^{2})$ then  
\[\limsupn \frac{1}{n}\ln \int_{\D^{\!*}\cap A}\exp\left(\frac{nx^{2}}{2y}\right)\,d\widetilde{\nu}_{n,\r}(x,y)<0\,.\]
\label{TypeVaradhanGen}
\end{prop}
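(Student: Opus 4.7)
My plan is to isolate the singularity of $F(x,y)=x^{2}/(2y)$ at $(0,0)$ via Proposition~\ref{TypeVaradhan(0,0)}, and to treat the complementary region through the standard Varadhan upper bound applied to the Cram\'er LDP for $(\widetilde{\nu}_{n,\r})_{n\geq 1}$. First I would fix $\d\in \,]0,\s^{2}[$ small enough that Proposition~\ref{TypeVaradhan(0,0)} supplies a constant $\g>0$ (depending only on $\r$) with
\[
\int_{\D^{\!*}} e^{nx^{2}/(2y)}\,\ind{0<y\leq \d}\,d\widetilde{\nu}_{n,\r}(x,y) \,\leq\, e^{-n\g}
\]
for all sufficiently large $n$. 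Since $\D^{\!*}\cap A\cap\{\,y\leq\d\,\}\subset \D^{\!*}\cap\{\,0<y\leq\d\,\}$, this already controls the contribution of that piece, irrespective of $A$.

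For the complementary set $C=A\cap\D\cap\{\,y\geq \d\,\}$, which is closed in $\R^{2}$ and does not contain $(0,\s^{2})$, the function $F$ is continuous and bounded above by $1/2$ on $\D\cap\{\,y\geq \d\,\}$ (since $x^{2}\leq y$ on $\D$). Because $\widetilde{\nu}_{n,\r}$ is supported in $\D$ and Cram\'er's theorem supplies an LDP with good rate function $I$, the Varadhan upper bound applied to the closed set $C$ and the upper semi-continuous bounded function $F$ yields
\[
\limsupn \frac{1}{n}\ln \int_{C} e^{nF(x,y)}\,d\widetilde{\nu}_{n,\r}(x,y) \,\leq\, -\inf_{C}(I-F).
\]
To conclude, I need $\inf_{C}(I-F)>0$. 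From the preliminaries, $I-F\geq 0$ on $\D^{\!*}$ with unique zero $(0,\s^{2})$, and $I-F$ is lower semi-continuous on $\D\cap\{\,y\geq \d\,\}$. Since $I$ is good and $F$ is bounded on $\D$, the sub-level sets of $I-F$ inside $\D\cap\{\,y\geq \d\,\}$ are compact, so the infimum over the closed set $C$ is attained at some point of $C$, which must differ from $(0,\s^{2})$ and therefore makes the infimum strictly positive.

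Combining the two bounds gives
\[
\limsupn \frac{1}{n}\ln \int_{\D^{\!*}\cap A} e^{nx^{2}/(2y)}\,d\widetilde{\nu}_{n,\r}(x,y) \,\leq\, -\min\!\bigl(\g,\,\inf_{C}(I-F)\bigr)\,<\,0,
\]
which is the desired conclusion. The genuine difficulty of the proposition, namely the interplay between the singularity of $F$ at $(0,0)$ and a possibly positive mass $\r(\{0\})$, has already been absorbed into Proposition~\ref{TypeVaradhan(0,0)} through conditioning on the non-zero coordinates; what remains here is essentially a Varadhan upper bound together with the coercivity of $I-F$ on $\D\cap\{\,y\geq \d\,\}$, and I expect the only real checkpoint to be the attainment and positivity of $\inf_{C}(I-F)$.
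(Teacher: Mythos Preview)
Your proposal is correct and follows essentially the same route as the paper: split $\D^{\!*}\cap A$ into the strip $\{0<y\leq\d\}$, handled by Proposition~\ref{TypeVaradhan(0,0)}, and the closed set $A_{\d}=A\cap\D\cap\{y\geq\d\}$, handled by Varadhan's upper bound, then use goodness of $I$ and the uniqueness of the zero of $I-F$ on $\D^{\!*}$ to get $\inf_{A_{\d}}(I-F)>0$. Your explicit observation that $F\leq 1/2$ on $\D$ cleanly justifies the applicability of Varadhan's lemma without a moment condition, which the paper leaves implicit.
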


\begin{proof} By proposition~\ref{TypeVaradhan(0,0)}, there exist $\g>0$ and $\d>0$ such that
\[ \limsupn \frac{1}{n}\ln \int_{\D^{\!*}}e^{nx^2/(2y)}\ind{0<y\leq \d}\,d\widetilde{\nu}_{n,\r}(x,y) \leq -\g\,.\]
We set $A_{\d}=\{\,(x,y)\in \D\cap A:y\geq \d\,\}$. We have
\[\D^{\!*}\cap A \subset \{\,(x,y)\in \D^{\!*}:0<y\leq \d\,\} \cup A_{\d}\,.\]
The set $A_{\d}$ is closed, it does not contain $(0,\s^2)$ and $F$ is continuous on it. The usual Varadhan's lemma (see~\cite{DZ}) implies that
\[ \limsupn \frac{1}{n}\ln \int_{A_{\d}}e^{nx^2/(2y)}\,d\widetilde{\nu}_{n,\r}(x,y) <-\inf_{A_{\d}}\, (I-F)\,.\]
As a consequence
\[\limsupn \frac{1}{n}\ln\int_{\D^{\!*}\cap A}\exp\left(\frac{nx^{2}}{2y}\right)\,d\widetilde{\nu}_{n,\r}(x,y)\leq \max\left(-\g\,,\, -\inf_{A_{\d}}\, (I-F)\right)\,.\]
Since $(0,0) \in \Dro_{\L}$, $I$ is a good rate function and $I-F$ attains its minimum on the closed set $A_{\d}$. Since $A_{\d}$ does not contain $(0,\s^2)$, we have
\[\max\left(-\g\,,\, -\inf_{A_{\d}}\, (I-F)\right)<0\]
and the proposition is proved.
\end{proof}\newpage

\section{The Cram\'er condition}
\label{sectionCramer}

\noindent Let $d\geq 1$. For any $z=(a_1+ib_1,\dots,a_d+ib_d) \in \C^{d}$ and $x=(x_1,\dots,x_d) \in \R^d$, we denote
\[\langle z,x\rangle=\sum_{k=1}^d a_k x_k+i\sum_{k=1}^d b_k x_k\,.\]
If $z \in \R^d$ then $\langle z,x\rangle$ is the Euclidean inner product of $z$ and $x$.\medskip

\noindent Let $\nu$ be a non-degenerate probability measure on $\R^{d}$. We denote by $L$ its Log-Laplace and by $J$ its Cram\'er transform. Let $D_L$ and $D_J$ be the domains of $\R^d$ where the functions $L$ and $J$ are respectively finite. We put
\[D_{M}=\{\,z=a+ib \in \C^{d} : a \in D_{L}\,\}\]
and we define the function $M$ by
\[\forall z\in D_{M}\qquad M(z)=\int_{\R^{d}}e^{\langle z,x\rangle}\,d\nu(x)\,.\]
We notice that the function $s \in \R^d \longmapsto \ln M(s)$ is the Log-Laplace $L$ of $\nu$ and that $s\in \R^d \longmapsto M(is)$ is the Fourier transform of $\nu$.\medskip

\noindent One of the key ingredients for proving the main theorem of~\cite{CerfGorny} is the theorem 11 of\cite{CerfGorny} (which is extracted from~\cite{Baldi}). This theorem allows us to express the density of $\nu^{*n}$ as a function of $J$ and, under the condition
\begin{equation}
\forall \a>0 \qquad \sup_{\|s\|\geq \a}\left|M(is)\right|<1,
\label{(C)2} \tag{$C$}
\end{equation}
we can then obtain an asymptotic expansion. The condition $(C)$ is called the Cram\'er condition. In~\cite{MartinLof}, Anders Martin-L\"of uses an approximation of the identity to obtain a similar expression for more general measures on $\R$ satisfying the condition $(C)$, without requiring the existence of a density. \medskip

\noindent In this section we will prove $d$-dimensional analogs of the results of~\cite{MartinLof}.

\subsection{Around the Cram\'er condition}

\noindent We give here a sufficient condition for a measure $\nu$ on $\R^d$ to satisfy the Cram\'er condition $(C)$.

\begin{lem} If there exists $s_0 \neq 0$ such that $|M(is_0)|=1$ then $\nu$ is an arithmetic measure, i.e., there exists $(a,b) \in \R^2$ such that
\[\nu(\{\,x\in \R^d : \langle s_0 , x\rangle \in a+b\Z\,\})=1\,.\]
\end{lem}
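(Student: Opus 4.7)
The plan is to interpret the condition $|M(is_0)|=1$ as an extremal case that forces the integrand $e^{i\langle s_0,\cdot\rangle}$ to be constant $\nu$-almost surely. Since $M(is_0)$ is a complex number of modulus $1$, I would first write $M(is_0)=e^{i\theta}$ for some $\theta\in\R$ (its argument). Multiplying by $e^{-i\theta}$ yields
\[\int_{\R^d}e^{i(\langle s_0,x\rangle-\theta)}\,d\nu(x)=1.\]

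Since the right-hand side is real, I would take real parts to obtain
\[\int_{\R^d}\cos\bigl(\langle s_0,x\rangle-\theta\bigr)\,d\nu(x)=1.\]
Now the integrand is bounded above by $1$, and $\nu$ is a probability measure, so the equality of the integral with $1$ is equivalent to $\cos(\langle s_0,x\rangle-\theta)=1$ for $\nu$-almost every $x\in\R^d$. This forces $\langle s_0,x\rangle-\theta\in 2\pi\Z$ for $\nu$-almost every $x$, which is exactly the arithmeticity statement with the choice $a=\theta$ and $b=2\pi$.

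There is essentially no obstacle here: the argument is the standard equality-case analysis for the triangle inequality applied to characteristic functions, and it works verbatim in dimension $d$ because $\langle s_0,\cdot\rangle$ is a real-valued linear functional and only the scalar quantity $\langle s_0,x\rangle$ enters the exponent. The only minor point to keep in mind is that the choice of $\theta$ is determined up to $2\pi\Z$, which is harmless because the resulting coset $\theta+2\pi\Z$ does not depend on that choice.
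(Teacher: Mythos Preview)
Your proof is correct and follows essentially the same route as the paper: both identify $|M(is_0)|=1$ as the equality case of the triangle inequality $|\int e^{i\langle s_0,x\rangle}\,d\nu|\le 1$, which forces $e^{i\langle s_0,x\rangle}$ to equal a fixed constant $e^{i\theta}$ $\nu$-almost surely, and hence $\langle s_0,x\rangle\in \theta+2\pi\Z$. The only difference is cosmetic: you make the equality-case argument explicit by taking real parts and using $\cos\le 1$, whereas the paper simply invokes the classical equality case directly.
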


\begin{proof} Suppose that $|M(is_0)|=1$ for some $s_0 \neq 0$. Thus
\[1=\left|\int_{\R^{d}}e^{i\langle s_0,x\rangle}\,d\nu(x)\right|\leq \int_{\R^{d}}\,d\nu(x)=1\,.\]
We are in the equality case of this classical inequality, that is, there exists $b_0 \in \R$ such that
\[e^{i\langle s_0,x\rangle}=e^{ib_0}\qquad \nu\,\mbox{a.s.}\,,\]
whence
\[\nu(\{\,x\in \R^d : \langle s_0 , x\rangle \in b_0+2\pi\Z\,\})=1\]
and the lemma is proved.
\end{proof}
\medskip

\noindent Suppose that $\nu$ has a density with respect to the Lebesgue measure. By the Riemann-Lebesgue lemma,
\[|M(is)|=\left|\int_{\R^{d}}e^{i\langle s,x\rangle}\,d\nu(x)\right| \underset{\|s\|\to +\infty}{\longrightarrow}0\,.\]
As a consequence, if $\nu$ does not satisfy $(C)$, then there exists $s_0 \neq 0$, such that $|M(is_0)|=1$. By the previous lemma, $\nu$ is arithmetic. This is absurd. Therefore any probability measure having a density with respect to the Lebesgue measure satisfies $(C)$. Moreover, by the Lebesgue decomposition theorem (see~\cite{Rudin}), a probability measure $\nu$ can be represented as the sum of three components: 
\[\nu=a\,\nu_{ac}+b\,\nu_{d}+c\,\nu_{s}\,,\]
where $\nu_{ac}$ is an absolutely continuous probability measure, $\nu_{d}$ is a discrete probability measure, $\nu_{s}$ is a singular probability measure with no atoms and $a,b,c$ are three non-negative real numbers such that $a+b+c=1$. If $a>0$, we say that $\nu$ has an absolutely continuous component. An absolutely continuous probability measure admits a density, thus we have the following proposition:

\begin{prop} If $\nu$ has an absolutely continuous component then it satisfies the Cram\'er condition $(C)$.
\label{(C)AbsComponent}
\end{prop}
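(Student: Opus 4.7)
The strategy is to combine the previous lemma (which forces any $s_0\neq 0$ with $|M(is_0)|=1$ to make $\nu$ arithmetic) with the Riemann--Lebesgue lemma applied to the absolutely continuous component, by splitting $\{\|s\|\geq \alpha\}$ into a bounded annulus and an unbounded tail.

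First I would write $\nu = a\,\nu_{ac} + b\,\nu_{d} + c\,\nu_{s}$ with $a>0$, so that by the triangle inequality, for every $s\in\R^d$,
\[|M(is)| \leq a\,\left|\int_{\R^d} e^{i\langle s,x\rangle}\,d\nu_{ac}(x)\right| + b + c\,.\]
Since $\nu_{ac}$ admits a density with respect to the $d$-dimensional Lebesgue measure, the Riemann--Lebesgue lemma implies that the first absolute value tends to $0$ as $\|s\|\to +\infty$. Hence there exists $R>\alpha$ such that it is bounded by $1/2$ on $\{\|s\|\geq R\}$, which yields
\[\sup_{\|s\|\geq R}\,|M(is)|\ \leq\ \frac{a}{2}+b+c\ =\ 1-\frac{a}{2}\ <\ 1\,.\]

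To handle the compact annulus $K_{\alpha,R}=\{\,s\in\R^d : \alpha\leq \|s\|\leq R\,\}$, I would combine a pointwise argument with compactness. For any $s_0\in K_{\alpha,R}$, if $|M(is_0)|=1$ held, the previous lemma would imply that $\nu$ is concentrated on a set of the form $\{\,x:\langle s_0,x\rangle\in a_0+b_0\Z\,\}$, which is a countable union of affine hyperplanes and thus has $d$-dimensional Lebesgue measure zero; this contradicts $\nu\geq a\,\nu_{ac}$ with $a>0$ and $\nu_{ac}$ absolutely continuous. Hence $|M(is)|<1$ pointwise on $K_{\alpha,R}$, and by continuity of $s\mapsto |M(is)|$ together with compactness of $K_{\alpha,R}$, the supremum is attained and therefore strictly less than~$1$. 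Taking the maximum of the two bounds yields condition $(C)$.

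The main subtlety is not technical but conceptual: the pointwise strict inequality furnished by the previous lemma is \emph{not} sufficient for condition $(C)$, since the supremum over the unbounded set $\{\|s\|\geq \alpha\}$ could equal $1$ without ever being attained. The Riemann--Lebesgue input on the absolutely continuous part is precisely what forces the tail of $|M(is)|$ to stay uniformly bounded away from $1$, and this is why the hypothesis $a>0$ cannot be relaxed to, say, a measure whose support is Lebesgue-null.
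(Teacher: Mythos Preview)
Your proof is correct and uses the same ingredients as the paper: Riemann--Lebesgue on the absolutely continuous part to control the tail, the preceding lemma to exclude $|M(is_0)|=1$ at any $s_0\neq 0$, and continuity on a compact annulus. The paper in fact writes out the argument only when $\nu$ itself has a density and then states the general proposition, so your explicit use of the triangle inequality on the Lebesgue decomposition to handle the case $0<a<1$ is a welcome completion of what the paper leaves implicit.
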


\noindent We end this section by giving the proof of proposition~\ref{a>0Cramer}: we suppose that $\r=a\,\r_{ac}+b\,\r_{d}+c\,\r_{s}$, where $a>0$ and $\r_{ac}$ is a probability measure on $\R$ having a density $f$. We cannot use proposition~\ref{(C)AbsComponent} directly because $\nu_{\r}$ does not have a density. However, we saw in lemma~16\footnote{Actually it is lemma 30 if you refer to the ARXIV version of~\cite{CerfGorny}.} of~\cite{CerfGorny} that, if $\nu_{\r_{ac}}$ denotes the law of $(Z,Z^2)$ where $Z$ is a random variable with distribution $\r_{ac}$, then $\nu_{\r_{ac}}^{*2}$ has the density
\[f_{2}: (x,y)\longmapsto\frac{1}{\sqrt{2y-x^{2}}}\,f\left(\frac{x+\sqrt{2y-x^{2}}}{2}\right)\,f\left(\frac{x-\sqrt{2y-x^{2}}}{2}\right)\,\ind{x^{2}<2y}\,.\]
We can write $\r^{*2}=a^{2}\r_{ac}^{*2}+(1-a^2)\eta$, where $\eta$ is the probability measure on $\R^2$ defined by
\[\eta=\frac{1}{1-a^2}(b^2\r_{d}^{*2}+c^2\r_{s}^{*2}+2ab\,\r_{ac}*\r_{d}+2ac\,\r_{ac}*\r_{s}+2bc\,\r_{d}*\r_{s})\,.\]
We have then
\begin{align*}
&\left|\int_{\R}e^{isz+itz^2}\,d\r(z)\right|^{2}=\left|\int_{\R^{2}}e^{is(x+y)+it(x^2+y^2)}\,d\r(x)\,d\r(y)\right|\\
&\qquad\qquad\leq a^2\left|\int_{\R^{2}}e^{is(x+y)+it(x^2+y^2)}\,d\r^{*2}_{ac}(x,y)\right|+(1-a^2)\left|\int_{\R^{2}}\,d\eta(x,y)\right|\\
&\qquad\qquad\leq a^2\left|\int_{\R^{2}}e^{isu+itv}\,d\nu^{*2}_{\r_{ac}}(u,v)\right|+1-a^2.
\end{align*}
Hence
\[\sup_{\|(s,t)\|\geq \a}\left|\int_{\R}e^{isz+itz^2}\,d\r(z)\right|^{2}\leq a^{2}\sup_{\|(s,t)\|\geq \a}\left|\int_{\R^{2}} e^{isu+itv}f_{2}(u,v)\,du\,dv\right|+1-a^2\,.\]
Proposition~\ref{(C)AbsComponent} implies that the supremum in the right side of the previous inequality is stricly smaller that $1$. This ends the proof of proposition~\ref{a>0Cramer}.

\subsection{An asymptotic relation with the Cram\'er transform}

\noindent We define the function $k$ by
\[\forall x=(x_{1},\dots,x_{d}) \in \R^{d}\qquad k(x)=\prod_{j=1}^{d}\max(1-|x_j|,0)\]
and, for $c>0$, the function $k_c$ by
\[\forall x\in \R^{d}\qquad k_c(c)=\frac{1}{c^{d}}k\left(\frac{x}{c}\right)\,.\]
It is an approximation of the identity on $\R^d$ since the integral of $k$ is equal to~$1$. Finally, for any $n\geq 1$ and $c>0$, we introduce
\[\phi_{n,c}:x\in \R^{d} \longmapsto \int_{\R^{d}}k_{c}(s-nx)\,d\nu^{*n}(s)\,.\]
We notice that $\phi_{n,c}(x)=(k_c* \nu^{*n})(nx)$ for any $x \in \R^{d}$. A standard result on the approximations of the identity says that, if $\nu^{*n}$ has a density $f_{n}$, then
\[\lim_{c \to 0}\int_{\R^{d}}\left|\phi_{n,c}(x)-f_{n}(nx)\right|\,dx = 0\,.\]
This suggests that the asymptotic behaviour of $\phi_{n,c}$ and $\nu^{*n}$ are related, even in the general case when $\nu^{*n}$ does not have a density.

\begin{theo} Let $\nu$ be a non-degenerate probability measure on $\R^d$ such that the interior of $D_{L}$ is not empty. Let $K_{J}$ be a compact subset of $A_{J}$, the admissible domain of $J$. If $\nu$ satisfies the Cram\'er condition
\begin{equation}
\forall \a>0 \qquad \sup_{\|s\|\geq \a}\left|M(is)\right|<1,\tag{$C$}
\end{equation}
then there exists $\g>0$ such that, when $n$ goes to $+\infty$ and $c$ goes to $0$, uniformly over $x \in K_{J}$,
\[\phi_{n,c}(x)=(2\pi n)^{-d/2}\left(\mathrm{det}\, \mathrm{D}_{x}^{2}J\right)^{1/2} e^{-nJ(x)} \left(1+o(1) + O\left(n^{d/2}e^{-\g n}c^{-d}\right)\right)\,.\]
\label{exp(-nI)kc}
\end{theo}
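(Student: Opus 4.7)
The strategy is to apply an Esscher (exponential) tilt to strip off the $e^{-nJ(x)}$ factor, then evaluate the remaining integral by Fourier inversion and a frequency split, as in the classical local limit theorem. For $x\in K_{J}$, let $\tau=\tau(x)\in \Dro_{L}$ be the unique point with $\nabla L(\tau)=x$, so that $J(x)=\langle\tau,x\rangle-L(\tau)$ and $\mathrm{D}_{x}^{2}J=(\mathrm{D}^{2}L(\tau))^{-1}$. The tilted measure $d\nu_{\tau}(z)=e^{\langle\tau,z\rangle-L(\tau)}\,d\nu(z)$ is a probability measure with mean $x$ and covariance $\mathrm{D}^{2}L(\tau)$, and $d\nu^{*n}(s)=M(\tau)^{n}e^{-\langle\tau,s\rangle}\,d\nu_{\tau}^{*n}(s)$. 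Substituting $u=s-nx$ gives
\[
\phi_{n,c}(x)=e^{-nJ(x)}\int_{\R^{d}}h_{\tau}(u)\,d\mu_{n}(u),\qquad h_{\tau}(u):=k_{c}(u)e^{-\langle\tau,u\rangle},
\]
where $\mu_{n}$ is the law of $S_{n}-nx$ under $\nu_{\tau}^{*n}$. Since $k_{c}$ is continuous and compactly supported, $\hat{h}_{\tau}\in L^{1}(\R^{d})$, and Fourier inversion together with $\hat{\nu}_{\tau}(s)=M(\tau+is)/M(\tau)$ yields
\[
\int_{\R^{d}}h_{\tau}\,d\mu_{n}=\frac{1}{(2\pi)^{d}}\int_{\R^{d}}\hat{h}_{\tau}(s)\left(\frac{M(\tau+is)}{M(\tau)}\right)^{\!\!n}\!\!e^{-in\langle s,x\rangle}\,ds.
\]
I then split this integral at $|s|=\eta$ for a small fixed $\eta>0$.

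\medskip
\noindent On the low-frequency region $|s|\le\eta$, rescale $s=u/\sqrt{n}$. The Taylor expansion $L(\tau+iu/\sqrt{n})-L(\tau)=i\langle u,x\rangle/\sqrt{n}-\tfrac{1}{2n}\langle\mathrm{D}^{2}L(\tau)u,u\rangle+O(n^{-3/2}|u|^{3})$ cancels the phase $e^{-i\sqrt{n}\langle u,x\rangle}$, so the integrand converges pointwise to $\hat{h}_{\tau}(0)\exp(-\tfrac{1}{2}\langle\mathrm{D}^{2}L(\tau)u,u\rangle)$. Uniform positivity of $\mathrm{D}^{2}L(\tau(x))$ over $x\in K_{J}$ supplies a Gaussian dominant, while $\hat{h}_{\tau}(0)=\int k_{c}(u)e^{-\langle\tau,u\rangle}\,du\to 1$ as $c\to 0$ since $k_{c}$ is an approximate identity. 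Dominated convergence and the usual Gaussian integral give
\[
\frac{1}{(2\pi)^{d}}\int_{|s|\le\eta}\hat{h}_{\tau}(s)\bigl(M(\tau+is)/M(\tau)\bigr)^{n}e^{-in\langle s,x\rangle}\,ds=n^{-d/2}(2\pi)^{-d/2}(\det\mathrm{D}_{x}^{2}J)^{1/2}(1+o(1)),
\]
uniformly over $x\in K_{J}$, because $\tau(K_{J})$ is compact in $\Dro_{L}$.

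\medskip
\noindent On the high-frequency region the essential input is a uniform Cram\'er bound
\[
\rho:=\sup\Bigl\{\bigl|M(\tau(x)+is)/M(\tau(x))\bigr|:x\in K_{J},\ |s|\ge\eta\Bigr\}<1.
\]
Granted this, $|M(\tau+is)/M(\tau)|^{n}\le\rho^{n}$ on $|s|\ge\eta$; the product tent $k$ satisfies $|\hat{k}(w)|\le C\prod_{j}(1\wedge w_{j}^{-2})$, so $\hat{k}_{c}(s)=\hat{k}(cs)$ gives $\int|\hat{h}_{\tau}|\,ds=O(c^{-d})$ uniformly in $\tau\in\tau(K_{J})$, and the high-frequency integral is $O(c^{-d}\rho^{n})$. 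Multiplying by $e^{-nJ(x)}$ and factoring out $(2\pi n)^{-d/2}(\det\mathrm{D}_{x}^{2}J)^{1/2}e^{-nJ(x)}$ then produces the announced expansion with $\gamma=-\ln\rho>0$.

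\medskip
\noindent The main obstacle is establishing the uniform Cram\'er bound above. Non-arithmeticity of every $\nu_{\tau}$ follows immediately from $(C)$ for $\nu$, and on any compact annulus $\eta\le|s|\le R$ a continuity-compactness argument in $\tau\in\tau(K_{J})$ is routine; the delicate point is controlling $|s|\ge R$ large, where Riemann-Lebesgue is not available in the absence of an absolutely continuous component. To close this I would exploit the identity
\[
M(\tau)^{2}-|M(\tau+is)|^{2}=2\iint e^{\langle\tau,y+z\rangle}\sin^{2}\bigl(\tfrac{1}{2}\langle s,y-z\rangle\bigr)\,d\nu(y)\,d\nu(z),
\]
truncate $\nu$ to a closed ball carrying almost all of its mass (legitimate because $e^{\langle\tau,\cdot\rangle}$ is $\nu$-integrable uniformly for $\tau\in\tau(K_{J})\subset\Dro_{L}$), and transfer the lower bound $1-|\hat{\nu}(s)|^{2}\ge c_{\eta}>0$ supplied by $(C)$ for $\nu$ to the tilted family.
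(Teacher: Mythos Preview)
Your proof follows essentially the same route as the paper's: exponential tilt to strip off $e^{-nJ(x)}$, Fourier inversion to express the remainder as an oscillatory integral involving $\hat{k}_c$ at a complex argument, a frequency split at a fixed radius, Taylor expansion plus a uniform dominated convergence argument on the inner region, and a Cram\'er-type tail bound times $\|\hat{k}_c(\cdot-i\tau)\|_{L^1}=O(c^{-d})$ on the outer region. The one substantive difference is how you secure the uniform (over $x\in K_J$) Cram\'er bound for the tilted family, which you rightly flag as the crux. The paper disposes of this in one stroke by noting that $\nu$ is absolutely continuous with respect to each tilted measure $\mu_x$ and invoking a lemma of Bahadur--Rao to the effect that the Cram\'er condition passes from $\nu_2$ to $\nu_1$ whenever $\nu_2\ll\nu_1$; uniformity over $x\in K_J$ is then asserted ``by the compactness of $K_J$''. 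Your direct route via the identity $M(\tau)^2-|M(\tau+is)|^2=2\iint e^{\langle\tau,y+z\rangle}\sin^2\bigl(\tfrac12\langle s,y-z\rangle\bigr)\,d\nu(y)\,d\nu(z)$ together with truncation to a large ball is a correct, self-contained alternative that in fact makes the uniformity in $\tau\in\tau(K_J)$ explicit (bounding $e^{\langle\tau,y+z\rangle}$ below on the truncated region), whereas the paper's compactness step is terse. One small correction: continuity and compact support of $k_c$ alone do not guarantee $\hat{h}_\tau\in L^1$; you need (and later use) the explicit product decay $|\hat{k}(w)|\le C\prod_j(1\wedge w_j^{-2})$, which the paper records as a separate preparatory lemma.
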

\vspace*{-0.5cm}

\noindent The ideas of the proof of this theorem come from the article~\cite{MartinLof} of Anders Martin-L\"of. It relies also on the following proposition:

\begin{prop} Let $\nu$ be a non-degenerate probability measure on $\R^{d}$ such that the interior of $D_{L}$ is non-empty. Let $A_{J}$ be the admissible domain of $J$.
\smallskip 

\noindent \textbf{(a)} The function $\nabla L$ is a $\Ck{\infty}$-diffeomorphism from $\Dro_{L}$ to $A_{J}$. Moreover
\[A_{J} \subset D_{J}=\{\,x \in \R^{d}:J(x)<+\infty\,\}\,.\]

\noindent \textbf{(b)} Denote by $\l$ the inverse $\Ck{\infty}$-diffeomorphism of $\nabla L$. Then the map $J$ is $\Ck{\infty}$ on $A_{J}$ and for any $x \in A_{J}$,
\[J(x)=\langle x,\l(x)\rangle-L(\l(x))\,,\]
\[\nabla J(x)=(\nabla L)^{-1}(x) = \l(x)\qquad \mbox{and} \qquad\mathrm{D}^{2}_{x}J=\left(\mathrm{D}^{2}_{\l(x)}L\right)^{-1}\,.\]

\noindent \textbf{(c)} If $D_{L}$ is an open subset of $\R^{d}$ then $A_{J}=\Dro_{J}=\Ccro$ where $\Cc$ denotes the convex hull of the support of $\nu$.
\label{Dadmissible}
\end{prop}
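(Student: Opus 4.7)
The plan is to exploit the smoothness and strict convexity of $L$ on $\Dro_L$, apply the inverse function theorem to $\nabla L$, and identify its image with $A_J$ through Legendre duality.

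First, I would show that $L\in \Ck{\infty}(\Dro_L)$ and is strictly convex on $\Dro_L$. Around any $s_0\in \Dro_L$ there is a ball contained in $D_L$, and on such a ball polynomial prefactors are dominated by the exponential weight, which permits differentiation under the integral sign to any order. A direct calculation then identifies the Hessian at $s$ with the covariance matrix of the tilted probability measure $d\nu_s(x)=e^{\langle s,x\rangle-L(s)}\,d\nu(x)$. Since $\nu$ is non-degenerate, so is $\nu_s$, hence $\mathrm{D}^2_s L$ is positive definite and $L$ is strictly convex on $\Dro_L$.

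Second, I would identify $\nabla L(\Dro_L)$ with $A_J$. For fixed $x$, the map $s\mapsto \langle x,s\rangle-L(s)$ is concave, so any critical point is a global maximizer. If $x=\nabla L(s_0)$ for some $s_0\in \Dro_L$, then $s_0$ realizes the supremum defining $J(x)$, which is therefore finite and attained in $\Dro_L$, so $x\in A_J$. Conversely, if $x\in A_J$ the supremum is attained at some $s_0\in \Dro_L$ and the vanishing gradient yields $\nabla L(s_0)=x$. This identifies $\nabla L(\Dro_L)=A_J$ and gives $A_J\subset D_J$. Strict convexity makes $\nabla L$ injective, and positive-definiteness of $\mathrm{D}^2 L$ together with the inverse function theorem promotes $\nabla L$ to a $\Ck{\infty}$-diffeomorphism with smooth inverse $\l$. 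The formulas of part (b) follow mechanically: on $A_J$ we have $J(x)=\langle x,\l(x)\rangle-L(\l(x))$ by construction, and differentiating in $x$ the terms involving $\mathrm{D}\l$ cancel thanks to $\nabla L(\l(x))=x$, leaving $\nabla J(x)=\l(x)$; differentiating $\nabla L\circ\l=\mathrm{id}$ once more gives $\mathrm{D}^2 L(\l(x))\cdot \mathrm{D}\l(x)=I$, hence $\mathrm{D}^2_x J=(\mathrm{D}^2_{\l(x)}L)^{-1}$.

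For part (c), I would assume $D_L$ open so that $D_L=\Dro_L$, and aim for $A_J=\Ccro$. Since $\nabla L(s)$ is the mean of $\nu_s$, which has the same affine hull as $\nu$, one checks $A_J=\nabla L(D_L)\subset \Ccro$. The main difficulty is the reverse inclusion: given $x\in \Ccro$ one must produce a maximizer of $s\mapsto \langle x,s\rangle-L(s)$ inside $D_L$. Openness of $D_L$ is crucial, since it forces $L(s)\to +\infty$ as $s$ approaches $\partial D_L$ via a Fatou-type argument, while the fact that $x$ lies strictly inside $\Cc$ prevents $\langle x,s\rangle-L(s)$ from approaching its supremum along sequences with $\|s\|\to +\infty$. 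Together these give coercivity and hence attainment of the supremum, yielding $\Ccro\subset A_J$. Finally, $\Dro_J=\Ccro$ follows from the standard fact that $J\equiv +\infty$ outside $\Cc$, obtained by separating $x\notin \Cc$ from $\Cc$ with a hyperplane. I expect the coercivity argument in part (c) to be the main obstacle; the earlier steps are classical consequences of convex duality and the inverse function theorem.
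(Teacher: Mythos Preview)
Your sketch is correct and follows the standard route through convex duality: smoothness and strict convexity of $L$ on $\Dro_L$, the inverse function theorem applied to $\nabla L$, and a coercivity argument for part~(c). Note, however, that the paper does not actually prove this proposition; it simply records that parts~(a) and~(b) are established in the references of Baldi and Borovkov, and part~(c) in the companion paper~\cite{CerfGorny}. Your outline is essentially what one finds in those sources, so there is nothing to compare beyond saying that you have reconstructed the expected argument.

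Two small remarks on points worth tightening if you write this out in full. First, the identification $\nabla L(\Dro_L)=A_J$ presupposes the definition of $A_J$ as the set of $x$ for which the supremum in $J(x)$ is attained at some point of $\Dro_L$; the paper takes this as understood, so you should state it explicitly. Second, in part~(c) the inclusion $\nabla L(D_L)\subset \Ccro$ deserves one more line: the mean of the tilted law $\nu_s$ lies in $\Cc$, and it cannot sit on a supporting hyperplane because $\nu_s$ (having the same null sets as $\nu$) is non-degenerate, hence it lies in $\Ccro$. The coercivity step you flag as the main obstacle is indeed the one place requiring real work; your two ingredients (lower semicontinuity of $L$ forcing blow-up at $\partial D_L$, and strict interiority of $x$ in $\Cc$ forcing $\langle x,s\rangle - L(s)\to -\infty$ as $\|s\|\to\infty$) are exactly right.
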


\noindent The points $(a)$ and $(b)$ are proved in~\cite{Baldi} and~\cite{Borovkov} and the point $(c)$ in~\cite{CerfGorny}.
\medskip

\noindent We will also need the three following lemmas:

\begin{lem} For any $c>0$ and $z \in \C$,
\[\int_{\R^{d}}e^{\langle x,z\rangle}k_{c}(x)\,dx=\prod_{j=1}^{d}\frac{2(\mathrm{cosh}(cz_j)-1)}{(cz_j)^{2}}\,.\]
Moreover, for any compact $K$ of $\R$, there exists $M>0$ such that
\[\forall s \in \R \qquad \sup_{u \in K}\,\left|\frac{2(\mathrm{cosh}(u+is)-1)}{(u+is)^{2}}\right|\leq \frac{M}{1+s^2}\,.\]
\label{lemk}
\end{lem}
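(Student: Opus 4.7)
The plan is to handle the two claims separately: the closed-form expression by an explicit one-dimensional calculation that exploits the product structure of $k$, and the uniform bound by splitting $s \in \R$ into the regimes $|s|\ge 1$ and $|s| \le 1$.

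For the integral formula, since $k(x) = \prod_{j=1}^{d}\max(1-|x_j|,0)$ and $k_c(x) = c^{-d}k(x/c)$, Fubini and the change of variable $y_j = x_j/c$ in each coordinate reduce the problem to the one-dimensional identity
\[\int_{-1}^{1} e^{wy}(1-|y|)\,dy = \frac{2(\cosh w - 1)}{w^{2}}, \qquad w \in \C.\]
The slickest way to obtain this is to recognize $y \mapsto \max(1-|y|, 0)$ as the self-convolution of the indicator of $[-1/2,1/2]$. Its Laplace transform therefore factors as the square of $\int_{-1/2}^{1/2}e^{wy}\,dy = 2\sinh(w/2)/w$, producing $4\sinh^{2}(w/2)/w^{2}$, and the half-angle identity $2\sinh^{2}(w/2) = \cosh w - 1$ puts this in the stated form. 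Substituting $w = cz_j$ and taking the product over $j$ yields the formula.

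For the uniform estimate, the key observation is the identity
\[|\cosh(u+is)|^{2} = \cosh^{2}u\cos^{2}s + \sinh^{2}u\sin^{2}s = \sinh^{2}u + \cos^{2}s,\]
obtained by expanding $\cosh(u+is) = \cosh u\cos s + i\sinh u \sin s$; this bounds $|\cosh(u+is)|$ uniformly in $s \in \R$ as soon as $u$ ranges over a compact set $K$. Consequently there exists $C_K > 0$ with $|2(\cosh(u+is) - 1)| \le C_K$ for all $u \in K$ and $s \in \R$. To convert this into the desired $1/(1+s^{2})$ bound I split into two regimes. When $|s| \ge 1$, the denominator satisfies $|u+is|^{2} = u^{2}+s^{2} \ge s^{2} \ge (1+s^{2})/2$, so the ratio is at most $2C_K/(1+s^{2})$. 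When $|s| \le 1$, the function $z \mapsto 2(\cosh z - 1)/z^{2}$ extends to an entire function (the apparent singularity at $0$ is removable because $\cosh z - 1 = z^{2}/2 + O(z^{4})$) and is therefore continuous and bounded on the compact set $K + i[-1,1]$ by some constant $C_K'$; on this range $C_K' \le 2C_K'/(1+s^{2})$ trivially. Taking $M = 2\max(C_K, C_K')$ completes the argument. I do not foresee a serious obstacle; the only mild subtlety is taking care of the removable singularity at $0$, handled by the power series expansion of $\cosh$.
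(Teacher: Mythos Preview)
Your proof is correct and follows essentially the same route as the paper: reduce the integral to a one-dimensional computation via Fubini, then for the bound split into $|s|\le 1$ (continuity on a compact set, handling the removable singularity at $0$) and $|s|\ge 1$ (direct estimate using $|u+is|^2\ge s^2$ and a uniform bound on $|\cosh(u+is)|$). The only notable difference is cosmetic: for the one-dimensional integral the paper integrates by parts directly, whereas you recognize the triangle function as $\mathbb{1}_{[-1/2,1/2]}*\mathbb{1}_{[-1/2,1/2]}$ and square its Laplace transform---a slicker computation, but not a different strategy.
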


\begin{proof} For any $\z\in \C\backslash\{0\}$,
\begin{align*}
\int_{\R}e^{\z s}\max\left(1-|s|,0\right)\,ds&=\int_{-1}^{1}e^{\z s} (1-|s|)\,ds\\
&=\int_{-1}^{1}e^{\z s}\,ds-2\int_{0}^{1}s\, \mathrm{cosh}(\z s)\,ds\\
&=\frac{2\mathrm{sinh}(\z)}{\z}-2\left(\frac{\mathrm{sinh}(\z)}{\z}-\frac{\mathrm{cosh}(\z)-1}{(\z)^{2}}\right)\\
&=\frac{2(\mathrm{cosh}(\z)-1)}{\z^{2}}.
\end{align*}
and this last function can be extended to a continuous function at $\z=0$.
By Fubini's theorem, we have, for any $c>0$ and $z\in \C^{d}$,
\begin{align*}
\int_{\R^{d}}e^{\langle x,z\rangle}k_{c}(x)\,dx&=\prod_{j=1}^{d}\frac{1}{c}\int_{\R}e^{x_{j}z_{j}}\max\left(1-\left|\frac{x_j}{c}\right|,0\right)\,dx_j\\
&=\prod_{j=1}^{d}\int_{\R}e^{x_{j}cz_{j}}\max\left(1-\left|x_j\right|,0\right)\,dx_j\\
&=\prod_{j=1}^{d}\frac{2(\mathrm{cosh}(cz_j)-1)}{(cz_j)^{2}}.
\end{align*}
Next we define
\[f : (s,u)\in \R\times K \longmapsto \frac{2(1+s^2)(\mathrm{cosh}(u+is)-1)}{(u+is)^{2}}\,.\]
This is a continuous function on $\R\times K$ (at $u=s=0$ it can be extended to a continuous function by setting $f(0,0)=1$). Thus $f$ is bounded over the compact set $[-1,1]\times K$. Moreover, if $|s|>1$ and $u \in K$, we have
\begin{align*}
|f(s,u)|=\frac{2(1+s^2)}{u^2+s^2} \left|\mathrm{cosh}(u+is)-1\right|&\leq 2\left(\frac{1}{s^2}+1\right)\left(\mathrm{cosh}(u)+1\right)\\
&\leq 4\sup_{u \in K}\,(\mathrm{cosh}(u)+1)<+\infty.
\end{align*}
Hence $f$ is bounded over $\R\times K$ by some constant $M>0$. This ends the proof of the lemma.
\end{proof}
\medskip

\begin{lem}[Uniform dominated convergence theorem] Let $\Xc$ be a separable metric space and let $(\O,\Fc,\mu)$ be a measurable space. Let $f$ and $f_{n}$, $n\geq 1$, be real or complex-valued measurable functions defined on $\Xc \times \O$. Suppose that, for any $\o\in \O$, the functions $x \longmapsto f(x,\o)$ and $x \longmapsto f_{n}(x,\o)$, $n \in \N$, are continuous on $\Xc$ and that
\[\sup_{x \in \Xc}|f_{n}(x,\o)-f(x,\o)|\underset{n\to \infty}{\longrightarrow}0\,.\]
Suppose also that there exists a non-negative and integrable function $g$ on $\O$ such that
\[\forall n \in \N\quad \forall x\in \Xc\quad \forall\o\in \O \qquad |f_{n}(x,\o)|\leq g(\o)\,.\]
Then for any $x\in \Xc$, the function $\o \longmapsto f(x,\o)$ is integrable and
\[\sup_{x\in \Xc}\left|\int_{\O}f_{n}(x,\o)\,d\mu(\o)-\int_{\O}f(x,\o)\,d\mu(\o)\right|\underset{n\to \infty}{\longrightarrow}0\,.\]
\label{CVDuniforme}
\end{lem}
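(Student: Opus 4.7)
The plan is to reduce the uniform statement to the classical dominated convergence theorem applied to the \emph{envelope} function
\[h_{n}(\o)\,=\,\sup_{x\in\Xc}\,|f_{n}(x,\o)-f(x,\o)|.\]
Once I show that $\int_{\O}h_{n}\,d\mu\to 0$, the proof is finished by the trivial bound
\[\sup_{x\in\Xc}\left|\int_{\O}f_{n}(x,\o)\,d\mu(\o)-\int_{\O}f(x,\o)\,d\mu(\o)\right|\leq\int_{\O}h_{n}(\o)\,d\mu(\o),\]
which follows from pulling the absolute value inside the integral and then taking the supremum over $x$.

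The first technical step I need is the measurability of $h_{n}$. This is the one place where the hypothesis that $\Xc$ is a separable metric space is genuinely used: pick a countable dense set $D\subset\Xc$; for each fixed $\o$ the function $x\longmapsto f_{n}(x,\o)-f(x,\o)$ is continuous on $\Xc$ (by assumption on $f_{n}$ and $f$), hence
\[h_{n}(\o)=\sup_{x\in D}\,|f_{n}(x,\o)-f(x,\o)|,\]
which is a countable supremum of measurable functions in $\o$, hence measurable.

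Second, I would check the integrability ingredients needed to apply classical dominated convergence to the $h_{n}$. Because the uniform convergence hypothesis in particular gives pointwise convergence $f_{n}(x,\o)\to f(x,\o)$ for every $(x,\o)$, and because $|f_{n}(x,\o)|\leq g(\o)$ by hypothesis, passing to the limit yields $|f(x,\o)|\leq g(\o)$ as well; in particular $\o\longmapsto f(x,\o)$ is integrable for every $x\in\Xc$, which also takes care of the first conclusion of the lemma. From this one obtains $h_{n}(\o)\leq 2g(\o)$ for every $n$ and $\o$, while the uniform convergence assumption states precisely that $h_{n}(\o)\to 0$ for every $\o\in\O$.

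Applying the classical dominated convergence theorem to the sequence $(h_{n})_{n\geq 1}$ dominated by $2g$ then gives $\int_{\O}h_{n}\,d\mu\to 0$, and the displayed inequality above completes the proof. The main obstacle is thus the measurability of $h_{n}$, which is resolved cleanly by separability of $\Xc$; everything else is routine.
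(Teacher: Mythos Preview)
Your proof is correct and follows essentially the same route as the paper: both define the envelope $h_n(\o)=\sup_{x\in\Xc}|f_n(x,\o)-f(x,\o)|$, use separability of $\Xc$ together with continuity in $x$ to rewrite this supremum over a countable dense set and hence obtain measurability, and then show $\int_\O h_n\,d\mu\to 0$ via the domination $h_n\leq 2g$. The only cosmetic difference is that you invoke the classical dominated convergence theorem as a black box on the sequence $(h_n)$, whereas the paper unwinds that step by applying Fatou's lemma to $2g-h_n$; the content is identical.
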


\begin{proof} We adapt the proof of the classical dominated convergence theorem in~\cite{Rudin}. Sending $n$ to $+\infty$ in the domination inequality, we get
\[\forall (x,\o) \in \Xc\times \O \qquad |f(x,\o)|\leq g(\o)\,.\] 
This shows that $\o \longmapsto f(x,\o)$ is integrable. For any $n \in \N$, we set
\[h_{n}: \o \longmapsto \sup_{x \in \Xc}|f_{n}(x,\o)-f(x,\o)|\,.\]
For all $n \in \N$ and $\o \in \O$, the function $x\in \Xc \longmapsto |f_{n}(x,\o)-f(x,\o)|$ is continuous and, since $\Xc$ is separable, its supremum is equal to its supremum on a countable dense subset of $\Xc$. Therefore $h_{n}$ is a measurable function. Moreover $(2g-h_{n})_{n\in \N}$ is a sequence of non-negative functions whose limit is the function~$2g$. Fatou's lemma implies that
\begin{align*}
\int_{\O}2g\,d\mu =\int_{\O}\liminfn (2g-h_{n}) \,d\mu&\leq \liminfn \int_{\O} (2g-h_{n}) \,d\mu\\
&=\int_{\O} 2g \,d\mu-\limsupn \int_{\O} h_{n} \,d\mu.
\end{align*}
Since $g$ is integrable, we get that
\[\limsupn \int_{\O} h_{n} \,d\mu \leq 0. \]
Hence $\int_{\O} h_{n} \,d\mu \to 0$ since for any $n \in \N$, $h_{n}$ is a non-negative function. Finally
\begin{align*}
\sup_{x\in \Xc}\left|\int_{\O}f_{n}(x,\o)\,d\mu(\o)-\int_{\O}f(x,\o)\,d\mu(\o)\right|&\leq \sup_{x \in \Xc}\int_{\O}|f_{n}(x,\o)-f(x,\o)|\,d\mu(\o)\\
&\leq\int_{\O} h_{n} \,d\mu \underset{n\to \infty}{\longrightarrow}0.
\end{align*}
and the lemma is proved.
\end{proof}
\medskip 

\begin{lem} If $\nu_2$ is a probability measure on $\R^d$ which satisfies the Cram\'er condition and which is absolutely continuous with respect to a probability measure $\nu_1$ on $\R^d$, then $\nu_1$ satisfies the Cram\'er condition.
\label{CramerAC}
\end{lem}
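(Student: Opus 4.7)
My plan is to argue by contrapositive: assume that $\nu_1$ does not satisfy the Cramér condition $(C)$, and deduce that $\nu_2$ cannot satisfy it either. Write $M_j(z)=\int_{\R^d}e^{\langle z,x\rangle}\,d\nu_j(x)$ for $j\in\{1,2\}$, so that $s\longmapsto M_j(is)$ is the Fourier transform of $\nu_j$. Failure of $(C)$ for $\nu_1$ furnishes some $\alpha>0$ and a sequence $(s_n)_{n\geq 1}$ in $\R^d$ with $\|s_n\|\geq\alpha$ for every $n$ and $|M_1(is_n)|\to 1$ as $n\to+\infty$.

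The core step is the elementary $L^2$ identity
\[\int_{\R^d}\bigl|e^{i\langle s_n,x\rangle}-M_1(is_n)\bigr|^{2}\,d\nu_1(x)=1-|M_1(is_n)|^{2}\underset{n\to+\infty}{\longrightarrow}0,\]
which shows that the function $x\longmapsto e^{i\langle s_n,x\rangle}-M_1(is_n)$ tends to $0$ in $L^{2}(\nu_1)$, and hence in $L^{1}(\nu_1)$ since $\nu_1$ is a probability measure. I would then extract a subsequence $(s_{n_k})_{k\geq 1}$ along which this convergence holds $\nu_1$-almost everywhere; the hypothesis $\nu_2\ll\nu_1$ guarantees that it also holds $\nu_2$-almost everywhere.

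Since the integrand is bounded in modulus by $2$ uniformly in $k$ and $x$, the dominated convergence theorem under the probability measure $\nu_2$ yields
\[M_2(is_{n_k})-M_1(is_{n_k})=\int_{\R^d}\bigl(e^{i\langle s_{n_k},x\rangle}-M_1(is_{n_k})\bigr)\,d\nu_2(x)\underset{k\to+\infty}{\longrightarrow}0.\]
Combined with $|M_1(is_{n_k})|\to 1$ and $\|s_{n_k}\|\geq\alpha$, this forces $\sup_{\|s\|\geq\alpha}|M_2(is)|=1$, contradicting the Cramér condition for $\nu_2$.

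The main obstacle I anticipate is that the sequence $(s_n)$ may well escape to infinity, so one cannot simply invoke continuity of the Fourier transform to land on a frequency $s_0\neq 0$ with $|M_1(is_0)|=1$ and then apply the arithmetic lemma of this section to $\nu_1$. My approach bypasses this difficulty by phrasing the argument entirely through $L^2$ and almost-everywhere convergence of the integrand, both of which transfer automatically from $\nu_1$ to $\nu_2$ via the absolute continuity hypothesis and require no information on where the $s_n$'s eventually go.
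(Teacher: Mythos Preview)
Your argument is correct. The $L^{2}$ identity
\[
\int_{\R^{d}}\bigl|e^{i\langle s_n,x\rangle}-M_{1}(is_n)\bigr|^{2}\,d\nu_{1}(x)=1-|M_{1}(is_n)|^{2}
\]
is exactly right, the passage to an almost-everywhere convergent subsequence is standard, absolute continuity transfers $\nu_{1}$-null sets to $\nu_{2}$-null sets, and the dominated convergence step under $\nu_{2}$ is justified by the uniform bound $2$. The conclusion $|M_{2}(is_{n_k})|\to 1$ with $\|s_{n_k}\|\geq\a$ then contradicts $(C)$ for $\nu_{2}$.

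As for the comparison: the paper does not actually prove this lemma; it simply refers the reader to lemma~4 of Bahadur and Ranga~Rao. Your write-up therefore supplies what the paper omits. In fact the Bahadur--Ranga~Rao argument proceeds along essentially the same lines (failure of $(C)$ for $\nu_{1}$ forces $e^{i\langle s_n,\cdot\rangle}$ to be close to a constant in $L^{2}(\nu_{1})$, and absolute continuity pushes this to $\nu_{2}$), so your approach is in the same spirit as the cited reference. Your remark that one cannot simply pass to a limit point $s_{0}\neq 0$ because $(s_n)$ may escape to infinity is well taken and explains why the almost-everywhere route is the natural one.
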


\noindent We refer to lemma 4 of~\cite{BahadurRao} for a proof.
\medskip

\noindent{\bf Proof of theorem~\ref{exp(-nI)kc}.} Lemma~\ref{lemk} implies that
\[\forall s \in \R^{d} \qquad \widehat{k}_{c}(s)=\prod_{j=1}^{d}\frac{2(1-\mathrm{cos}(cs_j))}{(cs_j)^{2}}\]
and, for any $u\in \R^{d}$, the function $x\longmapsto e^{\langle u,x\rangle}k_{c}(x)$ has the Fourier transform
\[s\in \R^{d}\longmapsto \prod_{j=1}^{d}\frac{2(\mathrm{cosh}(c(u_j+is_j))-1)}{(c(u_j+is_j))^{2}}, \]
which can be rewritten as
\[s\in \R^{d}\longmapsto \prod_{j=1}^{d}\frac{2(1-\mathrm{cos}(c(s_j-iu_j)))}{(c(s_j-iu_j))^{2}}=\widehat{k}_{c}(s-iu)\,.\]
This is an integrable function, thus by the Fourier inversion formula (see~\cite{Rudin}), the Fourier transform of $\smash{s\longmapsto(2\pi)^{-d}\,\widehat{k}_{c}(s-iu)}$ is $y\longmapsto e^{-\langle u,y\rangle}k_{c}(y)$.  
Let $x\in~\!\!\!K_{J}$ and $u\in \R^{d}$. A straightforward computation yields us that the Fourier transform~of
\[s\longmapsto \frac{1}{(2\pi)^{d}}e^{-n\langle x,u+is\rangle}\widehat{k}_{c}(s-iu)\]
is the function $y\longmapsto e^{-\langle u,y\rangle}k_{c}(y-nx)$. We have then
\begin{align*}
\phi_{n,c}(x)&=\int_{\R^d}e^{-\langle u,y\rangle}k_{c}(y-nx)\,e^{\langle u,y\rangle}\,d\nu^{*n}(y)\\
&=\int_{\R^d}\left(\int_{\R^d}e^{i\langle s,y\rangle}\frac{e^{-n\langle x,u+is\rangle}\widehat{k}_{c}(s-iu)}{(2\pi)^{d}}\,ds\right)\,e^{\langle u,y\rangle}\,d\nu^{*n}(y).
\end{align*}
By Fubini's theorem,
\begin{align*}
\phi_{n,c}(x)&=\int_{\R^d}\frac{e^{-n\langle x,u+is\rangle}\widehat{k}_{c}(s-iu)}{(2\pi)^{d}}\,\left(\int_{\R^d}e^{i\langle s,y\rangle}e^{\langle u,y\rangle}\,d\nu^{*n}(y)\right)\,ds\\
&=\int_{\R^{d}}\frac{e^{-n\langle x,u+is\rangle}\widehat{k}_{c}(s-iu)}{(2\pi)^{d}} M(u+is)^{n}\,ds.
\end{align*}
However $x\in A_{J}$ thus, if $\l$ denotes the inverse function of $\nabla L$, then theorem~\ref{Dadmissible} states that
\[J(x)=\langle \l(x),x\rangle-\ln M(\l(x))\,.\]
Replacing $u$ by $\l(x)$ in the previous integral, we get
\[\phi_{n,c}(x)=e^{-nJ(x)} \int_{\R^{d}} e^{-in\langle x,s\rangle}\,\frac{M(\l(x)+is)^{n}}{M(\l(x))^{n}}\,\widehat{k}_{c}(s-i\l(x))\,\frac{ds}{(2\pi)^{d}}\,.\]
We denote by $\mu_{x}$ the measure on $\R^{d}$ such that
\[d\mu_{x}(y)=\frac{e^{\langle x+y,\l(x) \rangle}}{M(\l(x))}\,d\nu(y+x)\,.\]  
Its Fourier transform is the function
\[s \longmapsto e^{-i\langle x,s\rangle}\,\frac{M(\l(x)+is)}{M(\l(x))}\]
so that
\[\phi_{n,c}(x)=e^{-nJ(x)} \int_{\R^{d}} \left(\widehat{\mu}_x(s)\right)^{n}\,\widehat{k}_{c}(s-i\l(x))\,\frac{ds}{(2\pi)^{d}}\,.\]

\noindent For any $x\in K_{J}$, the mean of $\mu_{x}$ is
\[\int_{\R^{d}}y \frac{e^{\langle x+y,\l(x) \rangle}}{\exp M(\l(x))} \,d\nu(y+x)=\int_{\R^{d}}(z-x) \frac{e^{\langle z,\l(x) \rangle}}{M(\l(x))} \,d\nu(z)=\nabla L(\l(x))-x=0\]
and its covariance matrix is $\G_{x}=\mathrm{D}^{2}_{\l(x)}L$ since for $1\leq i,j \leq d$ and $s\in D_{L}$,
\begin{align*}
(\G_{x})_{i,j}&=\frac{\int_{\R^{d}}y_{i}y_{j} e^{\langle \l(x),y+x \rangle}\,d\nu(y+x)}{M(\l(x))}=\frac{\int_{\R^{d}}(z_{i}-x_{i})(z_{j}-x_{j}) e^{\langle \l(x),z \rangle}\,d\nu(z)}{M(\l(x))}\\
&=\frac{\int_{\R^{d}}z_{i}z_{j} e^{\langle \l(x),z \rangle}\,d\nu(z)}{M(\l(x))}-x_{i}x_{j}=\frac{\partial^{2} L}{\partial s_{i}s_{j}}(\l(x)).
\end{align*}
When $t \to 0$, uniformly over $x\in K_{J}$, we have the expansion
\[\widehat{\mu}_x(t)=1-\frac{1}{2}\langle \G_{x}t,t\rangle+o(\|t\|^{2})\,.\]
Indeed the function $(x,t)\longmapsto \widehat{\mu}_x(t)$ is $\Ck{\infty}$ on $A_{J}\times \R^{d}$ (by proposition~\ref{Dadmissible}), thus the Taylor-Lagrange formula guarantees that the remainder term is uniformly controlled over $x\in K_{J}$. Therefore, for any $t \in \R^{d}$, uniformly over $x \in K_{J}$,
\[\widehat{\mu}_x\left(\frac{t}{\sqrt{n}}\right)^{n}\underset{n \to \infty}{\longrightarrow} \exp\left(-\frac{1}{2}\langle \G_{x}t,t\rangle\right)\,.\]
Moreover, for any $c>0$, $n\geq 1$, $t\in \R^{d}$ and $x\in K_{J}$,
\[\widehat{k}_{c}\left(\frac{t}{\sqrt{n}}-i\l(x)\right)=\int_{\R^{d}}f_{c,n}(x,s)\,ds\,,\]
with
\[\forall s \in \R^{d} \qquad f_{c,n}(x,s)=\exp\left(i\frac{c}{\sqrt{n}}\langle s,t\rangle + c\langle s,\l(x) \rangle \right)\,k(s)\,.\]
We have
\[\sup_{x \in K_{J}}\,\left|f_{c,n}(x,s)-k(s)\right|=k(s)\sup_{x \in K_{J}}\,\left|\exp\left(i\frac{c}{\sqrt{n}}\langle s,t\rangle + c\langle s,\l(x) \rangle \right)-1\right|\underset{\gfrac{n\to+\infty}{c\to 0}}{\longrightarrow}0\]
and, for all $s \in \R^{d}$, $x\in K_{I}$, $c\leq 1$ and $n\geq 1$,
\[\left|f_{c,n}(x,s)\right|\leq k(s) \sup_{\gfrac{x \in K_{I}}{t \in [-1,1]^d}}\,\exp\,\langle t,\l(x) \rangle\,.\]
The term on the right defines an integrable function on $\R^{d}$ since $k(s)=0$ for any $s \notin [-1,1]^d$. Thus the uniform dominated convergence theorem (lemma~\ref{CVDuniforme}) states that, for any $t \in \R^{d}$, uniformly over $x \in K_{J}$,
\[\widehat{k}_{c}\left(\frac{t}{\sqrt{n}}-i\l(x)\right) \underset{\gfrac{n\to+\infty}{c\to 0}}{\longrightarrow}1\,.\]

\noindent The functions $x \longmapsto \widehat{\mu}_{x}(t)$ and $x \longmapsto \exp\left(-\langle \G_{x}t,t\rangle/2\right)$, $t\in \R^{d}$, are continuous on $K_{J}$. In order to apply the dominated convergence theorem (the uniform variant), we need to get a uniform domination of the sequence of functions. For~$x \in A_{J}$, $\G_{x}$ is a positive definite symmetric matrix thus $\eps_{x}$, its smallest eigenvalue, is positive. The largest eigenvalue of the inverse of $\G_{x}$ is $\eps_{x}^{-1}$. Therefore, for any $x \in A_{J}$,
\[\eps_{x}=\left(\max\,\{\,\a:\a \mbox{ eigenvalue of }\G^{-1}_{x}\,\}\right)^{-1}=
\left(\sup_{y \neq 0}\frac{\langle \G^{-1}_{x} y, \G^{-1}_{x} y \rangle}{\langle y, y\rangle}\right)^{-1/2}\,.\]
The term on the right is the inverse of the operator norm of the linear application associated to the matrix $\G^{-1}_{x}$. Moreover $x \longmapsto \smash{\G_{x}=\mathrm{D}^{2}_{\l(x)}L}$ is continuous on~$A_{J}$ thus the function $x \longmapsto\eps_{x}$ is continuous. Let us denote by $\eps_{0}$ its minimum on~$K_{J}$. The compactness of $K_{J}$ ensures that $\eps_{0}>0$. The previous expansion implies that there exists $\d>0$ such that
\[\forall (t,x)\in \Bl(0,\d)\times K_{J} \qquad |\widehat{\mu}_x(t)|\leq 1-\frac{1}{2}\left\langle \left(\G_{x}-\frac{\eps_{0}}{2} \mathrm{I}_{d}\right)t ,t \right\rangle\,.\]
The spectral theorem for real symmetric matrices yields that, for any $x\in K_{J}$, the matrix $\G_{x}-\eps_{0}\mathrm{I}_{d}$ is positive symmetric. Thus
\[\forall t \in \R^{d} \qquad \left\langle \left(\G_{x}-\frac{\eps_{0}}{2} \mathrm{I}_{d}\right)t ,t \right\rangle-\frac{\eps_{0}}{2}\|t\|^{2}=\langle (\G_{x}-\eps_{0}\mathrm{I}_{d})t,t \rangle \geq 0\,.\]
It follows that
\[\forall (t,x)\in \Bl(0,\d)\times K_{J} \qquad |\widehat{\mu}_x(t)|\leq 1-\frac{\eps_{0}}{4}\|t\|^{2}\,.\]
Since $1-y\leq e^{-y}$ for all $y\geq 0$, we get
\[\forall n \geq 1 \quad \forall (t,x)\in \Bl(0,\d\sqrt{n})\times K_{J} \qquad \left|\widehat{\mu}_x\left(\frac{t}{\sqrt{n}}\right)\right|^{n}\leq \exp\left(-\frac{\eps_{0}}{4}\|t\|^{2}\right)\,.\]
The right term is integrable and does not depend on $x \in K_{J}$ and $n$.  Moreover $\widehat{k}_c(t)=\widehat{k}(ct)$ for $t\in \R$, and by lemma~\ref{lemk}, the function $\widehat{k}_{c}(\cdot/\sqrt{n}-i\l(x))$ is bounded uniformly over $x\in K_{J}$, $c>0$ and $n\geq 1$. The uniform dominated convergence theorem (lemma~\ref{CVDuniforme}) implies that, uniformly over $x\in K_{J}$,
\[\int_{\|t\|<\d \sqrt{n}}\!\!\widehat{\mu}_{x}\left(\,\frac{t}{\sqrt{n}}\!\right)^{n}\,\widehat{k}_{c}\!\left(\!\frac{t}{\sqrt{n}}-i\l(x)\!\right)\,dt\underset{\gfrac{n\to+\infty}{c\to 0}}{\longrightarrow}\int_{\R^{d}}\!\!\exp\!\left(\!-\frac{1}{2}\left\langle \left(\mathrm{D}^{2}_{\l(x)}L\!\right) t,t\right\rangle\right)\,dt\,.\]
Moreover this second integral is equal to $(2\pi)^{d/2} \left(\mathrm{det}\,\G_{x}\right)^{-1/2}$ and proposition~\ref{Dadmissible} guarantees that, for $x \in A_{J}$, $\mathrm{D}_{\l(x)}^{2}L$ is the inverse matrix of $\mathrm{D}_{x}^{2}J$. Therefore, when $n\to \infty$ and $c\to 0$, uniformly over $x \in K_{J}$,
\begin{align*}
\int_{\|t\|<\d}\widehat{\mu}_{x}(t)^{n}\,\widehat{k}_{c}(s-i\l(x))\,ds&=n^{-d/2}\int_{\|t\|<\d \sqrt{n}}\widehat{\mu}_{x}\left(\frac{t}{\sqrt{n}}\right)^{n}\,\widehat{k}_{c}\left(\!\frac{t}{\sqrt{n}}-i\l(x)\!\right)\,dt\\
&\sim \left(\frac{2\pi}{n}\right)^{d/2} \left(\mathrm{det}\, \mathrm{D}_{x}^{2}J\right)^{1/2}.
\end{align*}

\noindent Let us consider now the remaining integral
\[\int_{\|t\|\geq\d}\widehat{\mu}_{x}(t)^{n}\,\widehat{k}_{c}(s-i\l(x))\,ds\,,\]
the rest of the integral. The measure $\nu$ satisfies the Cram\'er condition and $\nu$ is absolutely continuous with respect to $\mu_x$. By lemma~\ref{CramerAC}, we get that $\mu_x$ also satisfies the Cram\'er condition: 
\[\sup_{\|s\|\geq \d}\left|\widehat{\mu}_x(s)\right|<1. \]
Therefore, by the compactness of $K_{J}$,
\[\sup_{x \in K_{J}}\sup_{\|s\|\geq \d}|\widehat{\mu}_{x}(s)|=e^{-\g}<1\,,\]
for some $\g>0$. As a consequence
\[\sup_{x\in K_{J}}\left|\int_{\|s\|\geq\d}\widehat{\mu}_{x}(s)^{n}\,\widehat{k}_{c}(s-i\l(x))\,ds\right|\leq e^{-n\g} \int_{\R^{d}} \sup_{x\in K_{J}}\,\widehat{k}_{c}(s-i\l(x))\,ds\,.\]
By lemma~\ref{lemk}, we have
\[\int_{\R^{d}} \sup_{x\in K_{J}}\,\widehat{k}_{c}(s-i\l(x))\,ds=O\left(\prod_{j=1}^{d}\int_{\R^{d}}\frac{1}{1+(cs_j)^2}\,ds_j\right)=O\left(\frac{1}{c^d}\right)\,.\]
Finally, when $n\to +\infty$ and $c\to 0$,
\begin{align*}
\phi_{n,c}(x)&=\frac{e^{-nJ(x)}}{(2\pi)^{d}} \left(\left(\frac{2\pi}{n}\right)^{d/2} \left(\mathrm{det}\, \mathrm{D}_{x}^{2}J\right)^{1/2}(1+o(1))+O\left(e^{-n\g}c^{-d}\right)   \right)\\
&=(2\pi n)^{-d/2}\left(\mathrm{det}\, \mathrm{D}_{x}^{2}J\right)^{1/2} e^{-nJ(x)} \left(1+o(1) + O\left(n^{d/2}e^{-\g n}c^{-d}\right)\right)\!.
\end{align*}
The boundedness of the function $x\longmapsto \left(\mathrm{det}\, \mathrm{D}_{x}^{2}J\right)^{1/2}$ on $K_{J}$ and the previous study show us that this expansion is uniform over $x \in K_{J}$. This ends the proof of theorem~\ref{exp(-nI)kc}.\qed

\section{Proof of theorem~\ref{MainTheorem}}
\label{ProofMainTheo}

\noindent In this section we use first proposition~\ref{TypeVaradhanGen} to prove the law of large numbers under $\widetilde{\mu}_{n,\r,g}$. Next, in order to prove the fluctuations theorem, we use Laplace's method: to this end, we introduce an integral with the approximation of the identity of section~\ref{sectionCramer}. Then proposition~\ref{exp(-nI)kc} gives the expansion of this integral. The technical part of the proof is to show that the remaining terms are negligible.\medskip

\noindent Suppose that $\r$ is a symmetric probability measure on $\R$ with positive variance~$\s^2$ and such that
\[\exists v_{0}>0 \qquad \int_{\R}e^{v_{0}z^{2}}\,d\r(z)<+\infty\,.\]
The fact that $g(u)\sim u^2/2$ in the neighbourhood of $0$ implies that $F_g$ is positive on some open neighbourhood~$\Vc$ of $(0,\s^2)$, which is included in $\D^{\!*}$. We have~then
\[Z_{n,g}=\int_{\D^{\!*}}\exp\left(nF_{\!g}(x,y)\right)\,d\widetilde{\nu}_{n,\r}(x,y)\geq \widetilde{\nu}_{n,\r}(\Vc)\,.\]
The large deviations principle satisfied by $(\widetilde{\nu}_{n,\r})_{n\geq 1}$ implies that
\[\liminfn \frac{1}{n} \ln Z_{n,g}\geq \liminfn \frac{1}{n} \ln \widetilde{\nu}_{n,\r}(\Vc)\geq -\inf_{(x,y)\in \Vc}\,I(x,y)=0\,.\]
We denote by $\theta_{n,\r,g}$ the distribution of $(S_{n}/n,T_{n}/n)$ under $\widetilde{\mu}_{n,\r,g}$. Let $\Uc$ be an open neighbourhood of $(0,\s^{2})$ in $\R^{2}$. Since $F_{\!g}\leq F$, the results of section~\ref{Preliminary} and proposition~\ref{TypeVaradhanGen} imply that
\begin{multline*}
\limsupn \frac{1}{n}\ln \theta_{n,\r,g}(\Uc^{c})\leq\limsupn \frac{1}{n}\ln \int_{\D^{\!*}\cap \,\Uc^{c}}\exp\left(nF_{\!g}(x,y)\right)\,d\widetilde{\nu}_{n,\r}(x,y)\\
-\liminfn \frac{1}{n}\ln Z_{n,g}<0.
\end{multline*}
Hence there exist $\eps >0$ and $n_{0}\in \N$ such that 
\[ \forall n>n_0 \qquad\theta_{n,\r}(\Uc^{c}) \leq \exp(-n\eps)\,.\]
Thus, for each open neighbourhood $\Uc$ of $(0,\s^{2})$,
\[\lim_{n \to +\infty}\widetilde{\mu}_{n,\r,g}\left(\left(\frac{S_{n}}{n},\frac{T_{n}}{n}\right)\in \,\Uc^{c}\right)=0\,.\]
This means that, under $\widetilde{\mu}_{n,\r,g}$, $(S_{n}/n,T_{n}/n)$ converges in probability to $(0,\s^{2})$.\medskip

\noindent We suppose in addition that $g$ has a fourth derivative at~$0$ and that $\r$ satisfies 
\begin{equation}
\forall \a>0 \qquad \sup_{\|(s,t)\|\geq \a}\left|\int_{\R}e^{isz+itz^2}\,d\r(z)\right|<1.\tag{$C$}
\end{equation}
This is the Cram\'er condition for $\nu_{\r}$. Let us prove that, under $\widetilde{\mu}_{n,\r,g}$,
\[\frac{S_{n}}{n^{3/4}} \overset{\Lc}{\underset{n \to \infty}{\longrightarrow}} \left(\frac{4(\mu_{4}+m_4\s^4)}{3\s^{4}}\right)^{1/4}\G\left(\frac{1}{4}\right)^{-1} \exp\left(-\frac{\mu_{4}+m_4\s^4}{12 \s^{8}}s^{4}\right)\,ds\,.\]
This is equivalent to the convergence announced in theorem~\ref{MainTheorem}. For $u \in \R$, we define
\begin{multline*}
E_{n}(u)=\int_{\R^{n}}\exp\left(iu\,\frac{x_{1}+\dots+x_{n}}{n^{3/4}}
+ng\left(\frac{x_1+\dots+x_n}{\sqrt{n(x_1^2+\dots+x_n^2)}}\right)\right)\\
\times\ind{\{x_{1}^{2}+\dots+x_{n}^{2}>0\}}\,\prod_{j=1}^{n}d\r(x_{j}).
\end{multline*}
Let us notice that $Z_{n,g}=E_{n}(0)$ and that
\[\E_{\tilde{\mu}_{n,\r}}\left[\exp\left(iu\,\frac{S_n}{n^{3/4}}\right)\right]=\frac{E_{n}(u)}{E_{n}(0)}\,.\]
By Paul Levy's theorem, in order to obtain the convergence in law stated in theorem~\ref{MainTheorem}, it is necessary and sufficient to prove that, for any $u \in \R$, the sequence $(E_{n}(u)/E_{n}(0))_{n\geq 1}$ converges towards
\[\frac{\displaystyle{\int_{\R}\exp\left(iux-\frac{(\mu_{4}+m_4\s^4)x^{4}}{12\s^{8}}\right)\,dx}}{\displaystyle{\int_{\R}\exp\left(-\frac{(\mu_{4}+m_4\s^4)x^{4}}{12\s^{8}}\right)\,dx}}\,.\]
To this end, we will compute the expansion of $E_n(u)$, $n \geq 1$, $u\in \R$. We denote by $\widetilde{\nu}_{n,\r}$ the law of $(S_{n}/n,T_{n}/n)$ under $\r^{\otimes n}$. We have
\[\forall u\in \R \qquad E_{n}(u)=\int_{\D^{\!*}}\exp\left(iuxn^{1/4}+nF_{\!g}(x,y)\right)\,d\widetilde{\nu}_{n,\r}(x,y)\,.\]

\noindent Let $u\in \R$ and $\d>0$. We denote by $\Bl_{\d}$ the open ball in $\R^2$ of radius $\d$ centered at $(0,\s^{2})$. We choose $\d$ small enough so that $\Bl_{\d}$ is included in $K_I$, a compact subset of $A_I\subset \D^{\!*}$.
We define
\[f_n : (x,y)\in \R^{2} \longmapsto \exp(iuxn^{1/4})\,.\]
For all $n\geq 1$, we write $E_{n}(u)=A_{n}+B_{n}$ with
\[A_{n}=\int_{\Bl_{\d}}f_{n}\,e^{nF_{\!g}}\,d\widetilde{\nu}_{n,\r} \qquad \mbox{and} \qquad B_{n}=\int_{\left(\Bl_{\d}\right)^{c}\cap \D^{\!*}}f_{n}\,e^{nF_{\!g}}\,d\widetilde{\nu}_{n,\r}. \]
First, since $F_{\!g}\leq F$, proposition~\ref{TypeVaradhanGen} implies that there exists $\eps_{0}>0$ such that, for $n$ large enough,
\[|B_{n}|\leq \exp(-n\eps_{0})\,.\]
We next compute the expansion of $A_n$, using the results of the last section. We define the function $k$ by
\[\forall (x,y) \in \R^{2}\qquad k(x,y)=\max(1-|x|,0) \,\times\,\max(1-|y|,0)\]
and, for $c>0$, we define $k_c$ by
\[\forall (x,y) \in \R^{2}\qquad k_c(x,y)=\frac{1}{c^{2}}k\left(\frac{x}{c},\frac{y}{c}\right)\,.\]
We put
\[A_{n,c,1}=\int_{\R^{2}} k_{c/n}*\left(f_{n}e^{nF_{\!g}}\ind{\Bl_{\d}}\right)(s,t)\,d\widetilde{\nu}_{n,\r}(s,t)\]
and $A_{n,c,2}=A_{n}-A_{n,c,1}$. Fubini's theorem implies that
\begin{align*}
&A_{n,c,1}=\int_{\R^{2}} k_{c/n}*\left(f_{n}\,e^{nF_{\!g}}\ind{\Bl_{\d}}\right)\left(\frac{s}{n},\frac{t}{n}\right)\,d\nu_{\r}^{*n}(s,t)\\
&\quad=\int_{\R^{2}} \left(\int_{\R^{2}}k_{c/n}\left(\frac{s}{n}-x,\frac{t}{n}-y\right)f_{n}(x,y)\,e^{nF_{\!g}(x,y)}\ind{\Bl_{\d}}(x,y)\,dx\,dy\right)\,d\nu_{\r}^{*n}(s,t)\\
&\quad=\int_{\R^{2}} f_n(x,y)\,e^{nF_{\!g}(x,y)}\ind{\Bl_{\d}}(x,y)\left(\int_{\R^{2}}n^{2}k_{c}\left(s-nx,t-ny\right)\,d\nu_{\r}^{*n}(s,t)\right)\,dx\,dy\\
&\quad=n^{2}\int_{\Bl_{\d}} f_n(x,y)\,e^{nF_{\!g}(x,y)}\phi_{n,c}(x,y)\,dx\,dy,
\end{align*}
where
\[\forall (x,y) \in \R^{2} \qquad \phi_{n,c}(x,y)=\int_{\R^{2}}k_{c}\left(s-nx,t-ny\right)\,d\nu_{\r}^{*n}(s,t)\,.\]
We denote
\[H_{n,c} : (x,y) \in \R^{2} \longmapsto n e^{nI(x,y)}\phi_{n,c}(x,y)\,.\]
Hence
\[A_{n,c,1}=n \int_{\Bl_{\d}} f_n(x,y)\,e^{-n(I-F_{\!g})(x,y)}\,H_{n,c}(x,y)\,dx\,dy. \]
The measure $\nu_{\r}$ satisfies the Cram\'er condition, thus, by theorem~\ref{exp(-nI)kc}, there exists $\g>0$ such that, when $n$ goes to $+\infty$ and $c$ goes to $0$, uniformly over $(x,y) \in K_I$,
\[H_{n,c}(x,y)=\frac{1}{2\pi} \left(\mathrm{det}\, \mathrm{D}_{(x,y)}^{2}I\right)^{1/2}\left(1+o(1) + O\left(n e^{-\g n}c^{-2}\right)\right)\,.\]
We suppose that
\[\eps_{n,c}=n e^{-\g n}c^{-2}\underset{\gfrac{n\to \infty}{c\to 0}}{\longrightarrow}0\,.\] 
Then, uniformly over $(x,y) \in K_I$,
\[H_{n,c}(x,y)\underset{\gfrac{n\to \infty}{c\to 0}}{\longrightarrow}\frac{1}{2\pi} \left(\mathrm{det}\, \mathrm{D}_{(x,y)}^{2}I\right)^{1/2}\,.\]
We denote
\[\Bl_{\d,n}=\{\,(x,y) \in \R^{2} : \|(x n^{-1/4},yn^{-1/2})\|\leq \d\,\}\,,\]
where $\|\cdot\|$ is the euclidean norm on $\R^2$. Let us make the change of variable given by $(x,y) \longmapsto \left(xn^{-1/4},yn^{-1/2}+\s^2\right)$ with Jacobian $n^{-3/4}$:
\begin{multline*}
A_{n,c,1}=n^{1/4} \int_{\Bl_{\d,n}}\exp\left(iux-n(I-F_{\!g})\left(xn^{-1/4},yn^{-1/2}+\s^2\right)\right)\\ \times
H_{n,c}\left(xn^{-1/4},yn^{-1/2}+\s^2\right)\,dx\,dy.
\end{multline*}
We check now that we can apply the dominated convergence theorem to this integral. The uniform expansion of $H_{n,c}$ means that for any $\a>0$, there exist $n_{0} \in \N$ and $c_0>0$ such that
\[(x,y) \in K_{I} \quad n\geq n_{0} \quad c\leq c_{0} \quad\!\Longrightarrow\! \quad\left|H_{n,c}(x,y)\,2\pi \left(\mathrm{det}\, \mathrm{D}_{(x,y)}^{2}I\right)^{-1/2} -1\right|\leq \a\,.\]
If $(x,y) \in \Bl_{\d,n}$, then $(x_{n},y_{n})=(xn^{-1/4},yn^{-1/2}+\s^{2}) \in \Bl_{\d} \subset K_{I}$, thus for all $n\geq n_{0}$, $c \leq c_0$ and $(x,y) \in \Bl_{\d,n}$,
\[\left|H_{n,c}\left(\frac{x}{n^{1/4}},\frac{y}{\sqrt{n}}+\s^{2}\right)2\pi \left(\mathrm{det}\, \mathrm{D}_{(x_{n},y_{n})}^{2}I\right)^{-1/2} -1\right|\leq \a\,.\]
Moreover $(x_{n},y_{n}) \to (0,\s^{2})$ thus, by continuity,
\[\left(\mathrm{D}_{(x_{n},y_{n})}^{2}I\right)^{-1/2} \underset{n\to +\infty}{\longrightarrow} \left(\mathrm{D}_{(0,\s^{2})}^{2}I \right)^{-1/2} = \left(\mathrm{D}_{(0,0)}^{2}\L\right)^{1/2}, \]
whose determinant is equal to $\sqrt{\s^{2}(\mu_{4}-\s^{4})}$.
Therefore
\[\ind{\Bl_{\d,n}}(x,y)\,H_{n,c}\left(\frac{x}{n^{1/4}},\frac{y}{\sqrt{n}}+\s^{2}\right)\underset{\gfrac{n\to \infty}{c\to 0}}{\longrightarrow} \left(4 \pi^{2} \s^{2}(\mu_{4}-\s^{4})\right)^{-1/2}\,.\]
We proved in section~\ref{Preliminary} that, when $(x,y)$ goes to $(0,\s^{2})$,
\[I(x,y)-F_{\!g}(x,y)\sim\frac{(\mu_{4}+m_4\s^4)x^{4}}{12\s^{8}}+\frac{(y-\s^{2})^{2}}{2(\mu_{4}-\s^{4})}\,.\]
It follows that
\[n(I-F_{\!g})\left(\frac{x}{n^{1/4}},\frac{y}{\sqrt{n}}+\s^{2}\right) \underset{n\to +\infty}{\longrightarrow} \frac{(\mu_{4}+m_4\s^4)x^{4}}{12\s^{8}}+\frac{y^{2}}{2(\mu_{4}-\s^{4})}\,.\]
Let us check that the integrand is dominated by an integrable function, which is independent of $n$. The function
\[(x,y) \longmapsto \left(\mathrm{D}_{(x,y)}^{2}I\right)^{-1/2}\]
is bounded on $\Bl_{\d}$ by some $M_{\d}>0$. The uniform expansion of $H_{n,c}$ implies that for all $(x,y) \in \Bl_{\d}$, $H_{n,c}(x,y)\leq C_{\d}$ for some constant $C_{\d}>0$. Finally, it follows from the above expansion of the proposition that, for $\d>0$ small enough,
\[\forall (x,y) \in \Bl_{\d} \qquad G(x,y)=I(x,y)-F_{\!g}(x,y) \geq \frac{(\mu_{4}+m_4\s^4)x^{4}}{24\s^{8}}+\frac{(y-\s^{2})^{2}}{4(\mu_{4}-\s^{4})}\]
and thus, for $\d$ small enough, for any $(x,y) \in  \R^{2}$, $n\geq n_{0}$ and $c\leq c_0$,
\begin{multline*}
\ind{\Bl_{\d,n}}(x,y)\exp\left(-n(I-F_{\!g})\left(\frac{x}{n^{1/4}},\frac{y}{\sqrt{n}}+\s^{2}\right)\right)H_{n,c}\left(\frac{x}{n^{1/4}},\frac{y}{\sqrt{n}}+\s^{2}\right) \\
\leq C_{\d}\exp\left(-\frac{(\mu_{4}+m_4\s^4)x^{4}}{24\s^{8}}-\frac{y^{2}}{4(\mu_{4}-\s^{4})}\right)\!.
\end{multline*}
and the right term is an integrable function on $\R^{2}$. It follows from the dominated convergence theorem that, when $n$ goes to $+\infty$ and $c$ goes to $0$, then $n^{-1/4}A_{n,c,1}$ converges to
\[\int_{\R^{2}}\frac{\exp(iux)}{\sqrt{2 \pi \s^{2}}\sqrt{2 \pi (\mu_{4}-\s^{4})}}\exp\left(-\frac{(\mu_{4}+m_4\s^4)x^{4}}{12\s^{8}}-\frac{y^{2}}{2(\mu_{4}-\s^{4})}\right)dx\,dy\,.\]
By Fubini's theorem, we get
\[A_{n,c,1}\underset{\gfrac{n\to \infty}{c\to 0}}{\sim}\frac{n^{1/4}}{\sqrt{2\pi \s^{2}}}\int_{\R}\exp\left(iux-\frac{(\mu_{4}+m_4\s^4)x^{4}}{12\s^{8}}\right)\,dx\,.\]
Now we deal with $A_{n,c,2}$. We will introduce an indicator function in order to simplify the expression of $A_{n,c,2}$. We put $\a=\d/(2\sqrt{2})$ and 
\[A_{n,c,3}=\int_{\Bl_{\a}}\left[f_{n}(s,t)\,e^{nF_{\!g}(s,t)}\ind{\Bl_{\d}}(s,t)- k_{c/n}*\left(f_{n}e^{nF_{\!g}}\ind{\Bl_{\d}}\right)(s,t)\right]\,d\widetilde{\nu}_{n,\r}(s,t)\,,\]
\[A_{n,c,4}=\int_{\left(\Bl_{\a}\right)^{c}}f_{n}(s,t)\,e^{nF_{\!g}(s,t)}\ind{\Bl_{\d}}(s,t)\,d\widetilde{\nu}_{n,\r}(s,t)\,,\]
\[A_{n,c,5}=\int_{\left(\Bl_{\a}\right)^{c}}k_{c/n}*\left(f_{n}e^{nF_{\!g}}\ind{\Bl_{\d}}\right)(s,t)\,d\widetilde{\nu}_{n,\r}(s,t)\,,\]
so that $A_{n,c,2}=A_{n,c,3}+A_{n,c,4}-A_{n,c,5}$. Since $\Bl_{\d}\subset \D^{\!*}$ and $F_{\!g}\leq F$, we have
\[\left|A_{n,c,4}\right|\leq \int_{\left(\Bl_{\a}\right)^{c}\cap \D^{\!*}}e^{nF}\,d\widetilde{\nu}_{n,\r}\]
and proposition~\ref{TypeVaradhanGen} ensures that there exists $\eps_{1}>0$ such that, for $n$ large enough, 
\[A_{n,c,4}\underset{\gfrac{n\to \infty}{c\to 0}}{=}O\left(\exp(-n\eps_{1})\right)\,.\]

\noindent Until now we used the standard techniques of Laplace's method (cf. the proof of the main result of~\cite{CerfGorny}) together with an approximation of the identity. The computation of the expansion of $A_{n,c,3}$ and $A_{n,c,5}$ is the technical part of this proof. 

\begin{lem} If $\d$, $c/n$ and $cn^{1/4}$ are small enough, then
\[A_{n,c,3}\underset{\gfrac{n\to \infty}{c\to 0}}{=}o\left(E_{n}(0)\right)\,,\]
\[A_{n,c,5}\underset{\gfrac{n\to \infty}{c\to 0}}{=}O\left(\int_{\left(\Bl_{\a}\right)^{c}}e^{nF(s,t)}\,d\widetilde{\nu}_{n,\r}(s,t)\right)\,.\]
\label{lemA_nc34}
\end{lem}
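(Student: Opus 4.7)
Both estimates exploit the fact that $k_{c/n}$ is a probability density supported in the square $[-c/n,c/n]^{2}$, so convolution with $k_{c/n}$ perturbs its argument by at most $\sqrt{2}\,c/n$. The main obstacle I anticipate is the uniform control of $F$ and $F_{\!g}$ near the singular line $\{y=0\}$, where $F(x,y)=x^{2}/(2y)$ blows up and similarly for $F_{\!g}(x,y)=g(x/\sqrt{y})$. To handle this I would fix $\d<\s^{2}/2$ from the outset, so that $\Bl_{\d}$ together with a small $c/n$-enlargement lies in a compact region where both $F$ and $F_{\!g}$ are smooth with bounded first derivatives; the shift of size $c/n$ in the argument then translates into a shift of size $c$ in $nF$ and $nF_{\!g}$. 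Fubini's theorem is used freely throughout.

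For $A_{n,c,5}$, the plan is to use $|f_{n}|\leq 1$ and $F_{\!g}\leq F$ to obtain
\[|A_{n,c,5}|\leq \int_{(\Bl_{\a})^{c}}\int_{\R^{2}} k_{c/n}(s-x,t-y)\,e^{nF(x,y)}\ind{\Bl_{\d}}(x,y)\,dx\,dy\,d\widetilde{\nu}_{n,\r}(s,t)\,.\]
For fixed $(s,t)$, the inner integrand is supported on $(x,y)\in \Bl_{\d}$ with $|(x,y)-(s,t)|\leq \sqrt{2}\,c/n$, so the Lipschitz bound $n|F(x,y)-F(s,t)|\leq Cc$ holds for some $C=C(\d)$. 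Factoring out $e^{nF(s,t)}e^{Cc}$ from the inner integral and using $\int k_{c/n}=1$ yields the announced $O$-bound.

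For $A_{n,c,3}$, the key observation is that $\a=\d/(2\sqrt{2})<\d$, so for $c/n$ small enough one has $\Bl_{\a}+[-c/n,c/n]^{2}\subset \Bl_{\d}$, and hence $\ind{\Bl_{\d}}(x,y)=1$ on the relevant region. Since $k_{c/n}$ integrates to $1$, I would then write
\[A_{n,c,3}=\int_{\Bl_{\a}}\!\!\int k_{c/n}(s-x,t-y)\bigl[f_{n}(s,t)e^{nF_{\!g}(s,t)}-f_{n}(x,y)e^{nF_{\!g}(x,y)}\bigr]\,dx\,dy\,d\widetilde{\nu}_{n,\r}(s,t),\]
and split the bracket as $f_{n}(s,t)(e^{nF_{\!g}(s,t)}-e^{nF_{\!g}(x,y)})+e^{nF_{\!g}(x,y)}(f_{n}(s,t)-f_{n}(x,y))$. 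The Lipschitz bound for $F_{\!g}$ on $\Bl_{\d}$ gives $|e^{nF_{\!g}(s,t)}-e^{nF_{\!g}(x,y)}|=O(c)\,e^{nF_{\!g}(s,t)}$; for the oscillatory factor, $|e^{iusn^{1/4}}-e^{iuxn^{1/4}}|\leq|u||s-x|n^{1/4}\leq|u|cn^{-3/4}$, which tends to $0$ exactly because $cn^{1/4}$ is small. Combining these and bounding $\int_{\Bl_{\a}}e^{nF_{\!g}}\,d\widetilde{\nu}_{n,\r}\leq E_{n}(0)$ yields $A_{n,c,3}=o(E_{n}(0))$.
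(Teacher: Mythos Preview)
Your proof is correct and follows essentially the same route as the paper. For $A_{n,c,3}$ you remove the indicator $\ind{\Bl_{\d}}$ via $\Bl_{\a}+[-c/n,c/n]^{2}\subset\Bl_{\d}$ and then use the Lipschitz control of $F_{\!g}$ on $\Bl_{\d}$, exactly as the paper does; the only cosmetic difference is that you split the bracket additively while the paper factors multiplicatively through the exponent $\Psi_{s,t,n}$. For $A_{n,c,5}$ you pass from $F_{\!g}$ to $F$ immediately via $F_{\!g}\leq F$ and then use the Lipschitz bound for $F$, whereas the paper first localises to $\Bl_{2\d}$ and reuses the $A_{n,c,3}$ estimate with $u=0$ before invoking $F_{\!g}\leq F$; both arguments are equivalent once one observes (as you implicitly do) that the integrand already forces $(s,t)$ into a $\sqrt{2}\,c/n$-neighbourhood of $\Bl_{\d}$.

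One remark: your bound $|e^{iusn^{1/4}}-e^{iuxn^{1/4}}|\leq |u|\,|s-x|\,n^{1/4}\leq |u|\,c\,n^{-3/4}$ is in fact sharper than what the paper obtains. The paper's definition of $\Psi_{s,t,n}$ carries the term $-iuxn^{1/4}$, which after multiplication by $n$ and the substitution $x\mapsto cx/n$ yields a bound $|u|\,c\,n^{1/4}$; this is why the hypothesis ``$cn^{1/4}$ small'' appears in the statement. With your (correct) computation only ``$c$ bounded'' is needed for that particular term, so your sentence ``which tends to $0$ exactly because $cn^{1/4}$ is small'' undersells your own estimate---$c\,n^{-3/4}\to 0$ automatically under the standing assumptions.
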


\noindent Suppose that lemma~\ref{lemA_nc34} has been proved. Then proposition~\ref{TypeVaradhanGen} ensures that there exists $\eps_{2}>0$ such that, for $n$ large enough, 
\[A_{n,c,5}\underset{\gfrac{n\to \infty}{c\to 0}}{=}O\left(\exp(-n\eps_{2})\right)\,.\]
We put now together the previous estimates in order to conclude. We take $c=1/n$ so that $c$, $n e^{-\g n}c^{-2}$ and $cn^{1/4}$ go to $0$ when $n \to +\infty$. For $\d$ small enough, when $n$ goes to $+\infty$, we have
\begin{multline*}
A_{n}=\frac{n^{1/4}}{\sqrt{2\pi \s^{2}}}\int_{\R}\exp\left(iux-\frac{(\mu_{4}+m_4\s^4)x^{4}}{12\s^{8}}\right)\,dx\,(1+o(1))\\
+o\left(E_{n}(0)\right)+O\left(e^{-n\eps_{1}}+e^{-n\eps_{2}}\right).
\end{multline*}
Finally
\[e^{-n\eps_{0}}+e^{-n\eps_{1}}+e^{-n\eps_{2}}\underset{n\to \infty}{=}o\left(\frac{n^{1/4}}{\sqrt{2\pi \s^{2}}}\int_{\R}\exp\left(iux-\frac{(\mu_{4}+m_4\s^4)x^{4}}{12\s^{8}}\right)\,dx\right)\]
thus $E_{n}(u)=A_{n}+B_{n}$ is equal to
\[\frac{n^{1/4}}{\sqrt{2\pi \s^{2}}}\int_{\R}\exp\left(iux-\frac{(\mu_{4}+m_4\s^4)x^{4}}{12\s^{8}}\right)\,dx\,(1+o(1))+o\left(E_{n}(0)\right)\,.\]
Hence
\[E_{n}(0)\sim \frac{n^{1/4}}{\sqrt{2\pi \s^{2}}}\int_{\R}\exp\left(iux-\frac{(\mu_{4}+m_4\s^4)x^{4}}{12\s^{8}}\right)\,dx\,.\]
Therefore
\[\frac{E_{n}(0)}{E_{n}(0)}\underset{n\to +\infty}{\longrightarrow} \frac{\displaystyle{\int_{\R}\exp\left(iux-\frac{(\mu_{4}+m_4\s^4)x^{4}}{12\s^{8}}\right)\,dx}}{\displaystyle{\int_{\R}\exp\left(-\frac{(\mu_{4}+m_4\s^4)x^{4}}{12\s^{8}}\right)\,dx}}\,.\]
This ends the proof of theorem~\ref{MainTheorem}.\medskip

\noindent We still have to prove the expansions of $A_{n,c,3}$ and $A_{n,c,5}$ stated in lemma~\ref{lemA_nc34}.\medskip

\noindent{\bf Proof of Lemma~\ref{lemA_nc34}.} For $(s,t)\in \Bl_{\a}$, if we have $k_{c/n}(x-s,y-t)\neq 0$, then
\[1-|n(x-s)/c|>0 \qquad \mbox{and} \qquad 1-|n(y-t)/c|>0\]
and thus, for $c/n<\a$,
\[|x|\leq |x-s|+|s|< \frac{c}{n}+\frac{\d}{2\sqrt{2}}< \frac{\d}{\sqrt{2}}\,,\]
\[|y-\s^2|\leq |y-t|+|t-\s^2|< \frac{c}{n}+\frac{\d}{2\sqrt{2}}< \frac{\d}{\sqrt{2}}\,.\]
Hence $(x,y)\in \Bl_{\d}$ and
\[\forall (s,t) \in\Bl_{\a} \qquad  k_{c/n}(x-s,y-t)=k_{c/n}(x-s,y-t)\ind{\Bl_{\d}}(x,y)\,.\]
This implies that
\[\ind{\Bl_{\a}}\times \left(k_{c/n}*\left(f_{n}e^{nF_{\!g}}\ind{\Bl_{\d}}\right)\right)=\ind{\Bl_{\a}}\times\left( k_{c/n}*\left(f_{n}e^{nF_{\!g}}\right)\right)\,.\]
We have shown that, for $c/n<\a$,
\[A_{n,c,3}=\int_{\R^{2}}\ind{\Bl_{\a}}(s,t)\left[f_{n}(s,t)\,e^{nF_{\!g}(s,t)}- k_{c/n}*\left(f_{n}e^{nF_{\!g}}\right)(s,t)\right]\,d\widetilde{\nu}_{n,\r}(s,t)\,.\]
Let $(s,t)\in \Bl_{\a}$. We have
\begin{align*}
&\left[f_{n}\,e^{nF_{\!g}}- k_{c/n}*\left(f_{n}e^{nF_{\!g}}\right)\right](s,t)\\
&\qquad=\int_{\R^{2}}\left(f_{n}(s,t)e^{nF_{\!g}(s,t)}-f_{n}(s-x,t-y)e^{nF_{\!g}(s-x,t-y)}\right)\,k_{c/n}(x,y)\,dx\,dy\\
&\qquad=e^{nF_{\!g}(s,t)}f_{n}(s,t)\int_{\R^{2}}\left(1-e^{n\Psi_{s,t,n}(cx/n,cy/n)}\right)\,k(x,y)\,dx\,dy\\
&\qquad=e^{nF_{\!g}(s,t)}f_{n}(s,t)\int_{[-1,1]^2}\left(1-e^{n\Psi_{s,t,n}(cx/n,cy/n)}\right)\,k(x,y)\,dx\,dy,
\end{align*}
with, for each $(x,y)\in \R^{2}$,
\[\Psi_{s,t,n}(x,y)=F_{\!g}(s-x,t-y)-F_{\!g}(s,t)-iuxn^{1/4}\,.\]
By hypothesis, the function $g$ has a fourth derivative at~$0$ thus $g$ is $\Ck{1}$ in a neighbourhood of~$0$. As a consequence $F_{\!g}$ is $\Ck{1}$ in a neighbourhood of $(0,\s^2)$. Hence the mean value inequality implies that there exist $r>0$ and $M>0$ such that, for any $(s,t) \in \Bl_{r}$ and $(x,y)\in [-1,1]^2$,
\[|x|<r \quad \mbox{and} \quad |y|<r \quad\Longrightarrow \quad |F_{\!g}(s-x,t-y)-F_{\!g}(s,t)|\leq M \|(x,y)\|\,.\]
If $\d$ is small enough (so that $\a\leq r$) and $c\leq rn$ then, for any $(s,t)\in \Bl_{\a}$ and $(x,y)\in [-1,1]^2$,
\begin{align*}
\left|n\Psi_{s,t,n}\left(\frac{cx}{n},\frac{cy}{n}\right)\right|&\leq Mn\left\|\left(\frac{cx}{n},\frac{cy}{n}\right)\right\|+n\left|u\,\frac{cx}{n}\right|n^{1/4}\\
&\leq M \sqrt{2}\,c  +   |u|\,c\, n^{1/4}.
\end{align*}
By applying the mean value inequality to the function $(x,y) \in \R^2 \longmapsto e^{x+iy}$, we prove that, if $z\in \C$ has a small enough real part, then $\left|1-e^{z}\right|\leq 2 |z|$. Therefore, if $cn^{1/4}$ goes to $0$, then, for any $(s,t)\in \Bl_{\a}$, uniformly over $(x,y)\in [-1,1]^2$,
\[\left|1-e^{n\Psi_{s,t,n}(cx/n,cy/n)}\right|\leq 2M \sqrt{2}\,c  +   2|u|\,c\, n^{1/4}=o(1)\,.\]
Hence, if $\d$, $c/n$ and $cn^{1/4}$ are small enough, then
$A_{n,c,3}=o(E_n(0))$ when $n\to \infty$ and $c\to 0$. Next, for $(s,t) \in \R^2$, we have
\[k_{c/n}*\left(f_{n}e^{nF_{\!g}}\ind{\Bl_{\d}}\right)(s,t)=\int_{[-c/n,c/n]^2}k_{c/n}(x,y)\left(f_{n}e^{nF_{\!g}}\ind{\Bl_{\d}}\right)(s-x,t-y)\,dx\,dy\,.\]
We suppose that $\|s,t-\s^2\|>\d+\sqrt{2}c/n$. For $|x|\leq c/n$ and $|y|\leq c/n$, we have then
\[\|(s-x,t-y)-(0,\s^{2})\|\geq \|s,t-\s^2\|-\|x,y\|>\d+\sqrt{2}c/n-\sqrt{(c/n)^2+(c/n)^2}>\d\]
so that $\ind{\Bl_{\d}}(s-x,t-y)=0$ and then
\[k_{c/n}*\left(f_{n}e^{nF_{\!g}}\ind{\Bl_{\d}}\right)(s,t)=0\,.\]
If $c/n$ is small enough so that $\d+\sqrt{2}c/n\leq 2\d$ then
\[k_{c/n}*\left(f_{n}e^{nF_{\!g}}\ind{\Bl_{\d}}\right)=\left(k_{c/n}*\left(f_{n}e^{nF_{\!g}}\ind{\Bl_{\d}}\right)\right)\times\ind{\Bl_{2\d}}\,.\]
Hence
\begin{align*}
\left|A_{n,c,5}\right| &\leq\! \int_{\left(\Bl_{\a}\right)^{c}\cap \Bl_{2\d}}\!\!\left(\int_{\R^{2}}\! \left|k_{c/n}(s-x,t-y)\left(f_{n}e^{nF_{\!g}}\ind{\Bl_{\d}}\right)\!(x,y)\right|\,dx\,dy\!\right)d\widetilde{\nu}_{n,\r}(s,t)\\
&\leq \int_{\left(\Bl_{\a}\right)^{c}\cap \Bl_{2\d}} \left( k_{c/n}* e^{nF_{\!g}}\right)(s,t)\,d\widetilde{\nu}_{n,\r}(s,t).
\end{align*}
We note that, for $\d$ small enough, we have on $\Bl_{2\d}$,
\[\left|k_{c/n}* e^{nF_{\!g}}\right|\leq e^{nF_{\!g}} + \left|e^{nF_{\!g}}- k_{c/n}* e^{nF_{\!g}}\right|\leq e^{nF}\left(1+\,2 M \sqrt{2} c\right)\,,\]
if $c/n$ is small enough (we use here the same argument as in the control of $A_{n,c,3}$, with $u=0$). Finally
\[A_{n,c,5}\underset{\gfrac{n\to \infty}{c\to 0}}{=}O\left(\int_{\left(\Bl_{\a}\right)^{c}}e^{nF(s,t)}\,d\widetilde{\nu}_{n,\r}(s,t)\right)\,.\]
This ends the proof of the lemma. \qed

\nocite{Feller2}
\bibliographystyle{plain}
\bibliography{biblio}
\addcontentsline{toc}{section}{References}
\markboth{\uppercase{References}}{\uppercase{References}}

\end{document}